\documentclass{article}

\usepackage{amssymb}
\usepackage{amsfonts}
\usepackage{amsmath}
\usepackage{amsthm}
\usepackage{ascmac}
\usepackage[all]{xy}
\usepackage[colorlinks=true, urlcolor=rltblue, citecolor=drkgreen, linkcolor=drkred] {hyperref}
\usepackage {hyperref}
\usepackage{color}
\definecolor{rltblue}{rgb}{0,0,0.4}
\definecolor{drkgreen}{rgb}{0,0.4,0}
\definecolor{drkred}{rgb}{0.5,0,0}
\usepackage{pdfsync}
\usepackage[affil-it]{authblk}








\newtheorem{Theorem}{Theorem}[section]
\newtheorem{Lemma}[Theorem]{Lemma}

\newtheorem{Observation}{Observation}
\newtheorem{Fact}{Fact}
\newtheorem{Question}{Question}
\numberwithin{equation}{section}
\newtheorem{Claim}{Claim}

\theoremstyle{definition}

\newtheorem*{Acknowledgement}{Acknowledgement}

\theoremstyle{plain}



\newcounter{contenumi}


\def\upto{\mathop{\upharpoonright}}

\def\and{\mathrel{\&}}



\def\om{\omega}








 









\def\om{\omega}

\newcommand{\interval}{\mathbb{I}}

\newcommand{\pcolon}{\colon\!\!\!\subseteq}






\begin{document}

\title{The Brouwer invariance theorems in reverse mathematics}
\author{Takayuki Kihara\thanks{email address: \texttt{kihara@i.nagoya-u.ac.jp}\\ Keywords: reverse mathematics, second order arithmetic, invariance of dimension\\
MSC 2010: 03B30 (54F45 55M10)}}
\affil{\small Graduate School of Informatics, Nagoya University, Japan}
\date{}

\maketitle

\begin{abstract}
In his book \cite{Stillwell}, John Stillwell wrote ``finding the exact strength of the Brouwer invariance theorems seems to me one of the most interesting open problems in reverse mathematics.''
In this article, we solve Stillwell's problem by showing that (some forms of) the Brouwer invariance theorems are equivalent to weak K\"onig's lemma over the base system ${\sf RCA}_0$.
In particular, there exists an explicit algorithm which, whenever weak K\"onig's lemma is false, constructs a topological embedding of $\mathbb{R}^4$ into $\mathbb{R}^3$.
\end{abstract}

\section{Introduction}

How different are $\mathbb{R}^m$ and $\mathbb{R}^n$?
It is intuitively obvious that $\mathbb{R}^m$ and $\mathbb{R}^n$ are not homeomorphic whenever $m\not=n$.
However, it is not as easy as it appears.
Quite a few prominent mathematicians tried to solve this {\em invariance of dimension} problem, and nobody before Brouwer could succeed to provide a correct rigorous proof (see \cite[Section 5.1]{vanDalen} for the history of the invariance of dimension problem).


In the early days of topology, Brouwer proved three important theorems: {\em The Brouwer fixed point theorem}, {\em the invariance of dimension theorem}, and {\em the invariance of domain theorem}.
Modern proofs of these theorems make use of singular homology theory \cite{Hatcher} or its relative of the same nature, 
but even today, no direct proof (only using elementary topology) has been found.

Brouwer's intuitionistic standpoint eventually led him to refuse his theorems, and even propose a ``counterexample'' to his fixed point theorem.
As an alternative, Brouwer introduced an approximate version of the fixed point theorem (which follows from Sperner's lemma); however it does not provide us an approximation of an actual fixed point as already pointed out by Brouwer himself, cf.\ \cite[p.\ 503]{vanDalen}.
(Indeed, there is {\bf no} computable algorithm which, given a sequence $(x_n)_{n\in\mathbb{N}}$ of points such that $x_n$ looks like a fixed point within precision $2^{-n}$, produces an approximation of an actual fixed point.)
Then, how non-constructive are Brouwer's original theorems?

We examine this problem from the perspective of reverse mathematics.
Reverse mathematics is a program to determine the exact (set-existence) axioms which are needed to prove theorems of ordinary mathematics.
We employ a subsystem ${\sf RCA}_0$ of second order arithmetic as our base system, which consists of Robinson arithmetic (or the theory of the non-negative parts of discretely ordered rings), $\Sigma^0_1$-induction schema, and $\Delta^0_1$-comprehension schema, cf.\ \cite{Simpson,Stillwell}.

Roughly speaking, the system ${\sf RCA}_0$ corresponds to computable mathematics, which has enough power to show the approximate fixed point theorem (cf.\ \cite[Section IV.7]{Simpson}).
On the other hand, Orevkov \cite{Orevkov} showed that the Brouwer fixed point theorem is invalid in computable mathematics in a rigorous sense; hence ${\sf RCA}_0$ is not enough for proving the actual fixed point theorem.
%
%
%
%

In the Bishop-style constructive mathematics, it is claimed that a uniform continuous version of the invariance of dimension theorem has a constructive proof (cf.\ Beeson \cite[Section I.19]{Bee85}).
Similarly, in the same constructive setting, Julian-Mines-Richman \cite{JMR83} studied the Alexander duality theorem and the Jordan-Brouwer separation theorem (which are basic tools to show the invariance of domain theorem in modern algebraic topology, cf.\ \cite{Hatcher}).
However, these constructive versions are significantly different from original ones (from constructive and computable viewpoints).

Concerning the original theorems, Shioji-Tanaka \cite{ShTa90} (see also \cite[Section IV.7]{Simpson}) utilized Orevkov's idea to show that, over ${\sf RCA}_0$, the Brouwer fixed point theorem is equivalent to {\em weak K\"onig's lemma} (${\sf WKL}$): Every infinite binary tree has an infinite path.
%
%
%
Other examples equivalent to ${\sf WKL}$ include the Jordan curve theorem and the Sch\"onflies theorem  \cite{SaYo07}.

In his book \cite{Stillwell}, John Stillwell wrote ``finding the exact strength of the Brouwer invariance theorems seems to me one of the most interesting open problems in reverse mathematics.''
%
%
In this article, we solve this problem by showing that some forms of the Brouwer invariance theorems are equivalent to weak K\"onig's lemma over the base system ${\sf RCA}_0$.

\begin{Theorem}\label{thm:main-theorem}
The following are equivalent over ${\sf RCA}_0$:
\begin{enumerate}
\item Weak K\"onig's lemma.
\item (Invariance of Domain) Let $U\subseteq\mathbb{R}^m$ be an open set, and $f\colon U\to\mathbb{R}^m$ be a continuous injection.
Then, the image $f[U]$ is also open.
\item (Invariance of Dimension I) If $m>n$ then there is no continuous injection from $\mathbb{R}^m$ into $\mathbb{R}^n$.
\item (Invariance of Dimension II) If $m>n$ then there is no topological embedding of $\mathbb{R}^m$ into $\mathbb{R}^n$.
\end{enumerate}
\end{Theorem}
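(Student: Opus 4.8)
The plan is to establish the cycle of implications $(1)\Rightarrow(2)\Rightarrow(3)\Rightarrow(4)\Rightarrow(1)$ over ${\sf RCA}_0$. Two of the four links are soft point-set topology that already goes through in ${\sf RCA}_0$. The implication $(3)\Rightarrow(4)$ is immediate: a topological embedding is in particular a continuous injection, so any counterexample to (4) is at once a counterexample to (3). For $(2)\Rightarrow(3)$: given $m>n$ and a continuous injection $g\colon\mathbb{R}^m\to\mathbb{R}^n$, let $\iota\colon\mathbb{R}^n\hookrightarrow\mathbb{R}^m$ be the inclusion onto the first $n$ coordinates; then $\iota\circ g\colon\mathbb{R}^m\to\mathbb{R}^m$ is a continuous injection whose image lies in the proper subspace $\mathbb{R}^n\times\{0\}^{m-n}$, hence is a nonempty set with empty interior in $\mathbb{R}^m$ and cannot be open, contradicting (2). (As usual in reverse mathematics, ``$f[U]$ is open'' is read as the assertion that for every $y$ with $(\exists x\in U)\,f(x)=y$ there is $\varepsilon>0$ with $B(y,\varepsilon)\subseteq f[U]$; with this reading, and since composition of continuous functions is unproblematic in ${\sf RCA}_0$, the argument is formalizable there.) So the mathematical content of the theorem lies in the remaining two links.

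For $(1)\Rightarrow(2)$ I would work in ${\sf WKL}_0$ and formalize the amount of algebraic topology required by the classical proof of invariance of domain --- say, the degree- or local-homology-theoretic proof, whose crux is that an embedded $(m{-}1)$-disk does not separate $S^m$ while an embedded $(m{-}1)$-sphere does, so that a continuous injection $f$ on an open $U\subseteq\mathbb{R}^m$ must carry each point to an interior point of its image. The ingredients to be reconstructed in ${\sf WKL}_0$ are: the simplicial approximation theorem for maps of finite simplicial complexes, whose only nontrivial input is a modulus of uniform continuity for a continuous map on a compact polyhedron --- the availability of such a modulus being among the standard equivalents of ${\sf WKL}$ over ${\sf RCA}_0$ (see \cite{Simpson}); the simplicial chain complexes and simplicial homology of the finite complexes involved, together with the Mayer--Vietoris sequence, the computations being primitive recursive and hence unproblematic already in ${\sf RCA}_0$; and the non-retraction of the $m$-ball onto its boundary, which is available because ${\sf WKL}_0$ proves the Brouwer fixed point theorem (Shioji--Tanaka \cite{ShTa90}, \cite[Section IV.7]{Simpson}). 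Assembling these along the classical lines --- or, in low dimensions, building on the planar treatment of Sakamoto--Yokoyama \cite{SaYo07} --- yields invariance of domain in ${\sf WKL}_0$. The obstacle here is not conceptual but a matter of bookkeeping: one must verify that every appeal to compactness and every ``choice'' of an approximating simplicial map reduces to ${\sf WKL}$, so that no instance of ${\sf ACA}_0$ is smuggled in.

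The heart of the theorem, and where I expect the genuine difficulty, is the reversal $(4)\Rightarrow(1)$: from the failure of weak K\"onig's lemma one must produce a topological embedding $\mathbb{R}^4\hookrightarrow\mathbb{R}^3$, which at once refutes (4); moreover the construction should be completely explicit --- an ${\sf RCA}_0$-definable operation sending an infinite binary tree $T\subseteq 2^{<\mathbb{N}}$ with $[T]=\emptyset$ to such an embedding. Following the strategy of Orevkov \cite{Orevkov} and Shioji--Tanaka \cite{ShTa90}, who extract from such a tree a fixed-point-free self-map of the square, I would build the desired map $e\colon\mathbb{R}^4\to\mathbb{R}^3$ as the limit of a $T$-indexed system of piecewise-linear maps, attaching to each node $\sigma\in T$ a bounded region $C_\sigma\subseteq\mathbb{R}^4$ and a bounded region $R_\sigma\subseteq\mathbb{R}^3$, with $C_{\sigma 0},C_{\sigma 1}$ (respectively $R_{\sigma 0},R_{\sigma 1}$) disjoint subregions of $C_\sigma$ (respectively $R_\sigma$) whose diameters tend to $0$ along every branch, everything arranged so that $e$ is a genuine continuous function on all of $\mathbb{R}^4$ whose only possible departures from being a topological embedding --- a coincidence $e(x)=e(y)$ with $x\neq y$, or a discontinuity of $e^{-1}$ on its image --- would have to occur along an infinite branch of $T$. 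Since $[T]=\emptyset$, there is no such branch, so $e$ is a topological embedding; classically, where weak K\"onig's lemma forces $[T]\neq\emptyset$, the very same $e$ is a non-injective continuous map, in keeping with the classical truth of invariance of dimension. (It may be convenient to arrange $e$ to be proper, hence automatically a closed map and so an embedding; afterwards one can pad with identity factors and compose to get embeddings $\mathbb{R}^m\hookrightarrow\mathbb{R}^n$ for a range of $m>n$.) The principal obstacle is precisely this geometric design --- fitting the regions together so that $e$ is everywhere defined and continuous while funnelling every potential failure of the embedding property into the (empty) set of branches of $T$ --- together with the routine but indispensable verification that the entire construction is $\Delta^0_1$ in $T$ and that $\Sigma^0_1$-induction suffices for its analysis, so that it can all be carried out within ${\sf RCA}_0$.
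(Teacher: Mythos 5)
Your handling of (1)$\Rightarrow$(2), (2)$\Rightarrow$(3) and (3)$\Rightarrow$(4) is essentially the paper's: the paper also disposes of (2)$\Rightarrow$(3) by composing with the inclusion $\mathbb{R}^n\hookrightarrow\mathbb{R}^m$ and noting the image has empty interior, and for (1)$\Rightarrow$(2) it likewise only observes that the classical algebraic-topology proof (or Tao's simpler one) formalizes in ${\sf WKL}_0$. The genuine gap is the reversal (4)$\Rightarrow$(1): what you offer there is a hoped-for shape of a construction, not a construction, and the Orevkov/Shioji--Tanaka template you invoke does not transfer. In the fixed-point setting the finite stages are honest partial solutions whose only defect is localized at the nodes of $T$; for an embedding $\mathbb{R}^4\to\mathbb{R}^3$ there is no analogue of an honest finite stage. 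Any piecewise-linear (or otherwise finitely, rationally presented) map from a $4$-dimensional region into $\mathbb{R}^3$ is non-injective with coincidence witnesses definable by rational linear constraints, hence with rational witnesses that are present in every $\omega$-model; so non-injectivity of the stage maps cannot be ``funnelled into the branches of $T$'', injectivity can only emerge in the limit, and one needs a mechanism forcing the identifications made at stage $t$ to shrink to nothing as $t\to\infty$. Your sketch supplies no such mechanism, and this is exactly where $\neg{\sf WKL}$ must be used pervasively, at every scale, rather than once to empty $[T]$.

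The paper's reversal supplies that mechanism through dimension theory rather than a direct geometric design. From $\neg{\sf WKL}$ one gets Orevkov's continuous retraction $r\colon\mathbb{I}^2\to\partial\mathbb{I}^2$; composed with Tietze extensions this makes $\mathbb{S}^1$ an absolute extensor for every Polish space (Lemma \ref{lem:extension-dimension}). Two ${\sf RCA}_0$-lemmas in the Eilenberg--Otto style (Lemmas \ref{lem:lemma1} and \ref{lem:lemma2}, arranged so that only $\Sigma^0_1$-induction is needed) then give that every Polish space, in particular $\mathbb{R}^m$, has covering dimension at most $1$; and the N\"obeling imbedding theorem, proved compactness-free in ${\sf RCA}_0$ (Theorem \ref{thm:Nobeling}) via a limit of modified $\kappa$-mappings, embeds any such space into the $1$-dimensional N\"obeling space $N^1\subseteq\mathbb{R}^3$. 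The covering-dimension bound is precisely what makes the $\kappa$-mapping limit injective: order-$\leq 1$ star refinements together with general position force any stage-$t$ coincidence to occur inside a single element of the cover $\mathcal{V}_t$, so it is repaired in the limit, and the same claims yield an effective open map, hence a code for $f^{-1}$. Nothing in your proposal plays the role of this dimension-theoretic input; without the reduction ``$\neg{\sf WKL}$ implies every Polish space is at most one-dimensional, plus an effective N\"obeling embedding,'' the proposed $T$-indexed PL limit has no reason to converge to an injection with a continuous inverse.
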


\begin{proof}
For (1)$\Rightarrow$(2), as mentioned in Stillwell \cite{Stillwell}, the usual algebraic topology machineries (cf.\ \cite{Hatcher}) are available in ${\sf WKL}_0$.
A simpler proof of the invariance of domain theorem is presented in Tao \cite[Section 6.2]{Tao}, which can also be carried out in ${\sf WKL}_0$.

For (2)$\Rightarrow$(3), suppose $m>n$ and that there is a continuous injection $f$ from $\mathbb{R}^m$ into $\mathbb{R}^n$.
Define $g\colon\mathbb{R}^m\to\mathbb{R}^m$ by $g(x)=(f(x),0,0,\dots,0)$.
Then, $g$ is also a continuous injection.
Hence, by invariance of domain, the image of $g$ is open.
However, if $m>n$, then $\{(z,0,0,\dots,0)\in\mathbb{R}^m:z\in\mathbb{R}^n\}$ does not contain a nonempty open set.
Thus, we get $m\leq n$.

The implication (3)$\Rightarrow$(4) is obvious.
We devote the rest of the paper to proving the implication (4)$\Rightarrow$(1).
\end{proof}

We first describe the outline of our strategy for (the contrapositive of) (4)$\Rightarrow$(1):

First, we will show that several basic results in topological dimension theory are provable in ${\sf RCA}_0$.
More explicitly, ${\sf RCA}_0$ proves that, whenever the $n$-sphere $\mathbb{S}^n$ is an absolute extensor for $X$, the covering dimension of $X$ is at most $n$.
We also show that the N\"obeling imbedding theorem (stating that every $n$-dimensional Polish space is topologically embedded into a ``universal'' $n$-dimensional subspace of $\mathbb{R}^{2n+1}$) is provable in ${\sf RCA}_0$.

Then, under ${\sf RCA}_0+\neg{\sf WKL}$, we will show that the $1$-sphere $\mathbb{S}^1$ is an absolute extensor (for all Polish spaces).
This means that, under $\neg{\sf WKL}$, {\em every Polish space is at most one-dimensional}, and therefore, by the N\"obeling imbedding theorem, every Polish space is topologically embedded into $\mathbb{R}^3$.
In particular, we will see that, assuming $\neg{\sf WKL}$, a topological embedding of $\mathbb{R}^4$ into $\mathbb{R}^3$ {\bf does} exist.
%
%
%
However, the following two questions remain open.
 
\begin{Question}
Does ${\sf RCA}_0$ prove that there is no topological embedding of $\mathbb{R}^3$ into $\mathbb{R}^2$?
\end{Question}

\begin{Question}
Does ${\sf RCA}_0$ prove that $\mathbb{R}^m$ is not homeomorphic to $\mathbb{R}^n$ whenever $m\not=n$?
\end{Question}

\subsection{Preliminaries}

We assume that the reader is familiar with reverse mathematics (cf.~Stillwell \cite{Stillwell} and Simpson \cite{Simpson}).
In particular, we use standard formulations of mathematical concepts in second order arithmetic:
A real number is coded as a Cauchy sequence of rational numbers with modulus of convergence (\cite[Definition II.4.4]{Simpson}).
A Polish space $X$ is coded as a pair of a countable set $A\subseteq\mathbb{N}$ (which represents a countable dense subset of a space $X$) and a function $d\colon A^2\to\mathbb{R}$ (\cite[Definition II.5.2]{Simpson}).
A code of an open set $U\subseteq X$ is any sequence of rational open balls $B_n$ whose union is $U$ (\cite[Definition II.5.6]{Simpson}).
A code of a partial continuous function $f\pcolon X\to Y$ is any data $\Phi$ specifying a modulus of pointwise continuity for $f$; that is, if $(a,r,b,s)$ is enumerated into $\Phi$ at some round, then $x\in{\rm dom}(f)$ and $d_X(x,a)<r$ implies $d_Y(f(x),b)\leq s$ (\cite[Definition II.6.1]{Simpson}).
A topological embedding $f$ of $X$ into $Y$ is coded as a pair of (codes of) continuous functions $(f,g)$ such that $g\circ f(x)=x$ for any $x\in X$.

In particular, we note that a ``code'' of some mathematical object can always be considered as an element of $\mathbb{N}^\mathbb{N}$.
In reverse mathematics, we often use sentences like ``for a given $x$ one can {\em effectively} find a $y$ such that $\dots$'' when there is a partial continuous function $f\pcolon\mathbb{N}^\mathbb{N}\to\mathbb{N}^\mathbb{N}$ such that if $\dot{x}$ is a code of $x$ then $f(\dot{x})$ is defined and returns a code of such a $y$.

\section{Proof of (4)$\Rightarrow$(1)}

\subsection{Coincidence of dimension}

In this section, we discuss a few basic results on topological dimension theory within ${\sf RCA}_0$.
For basics on classical topological dimension theory, see Engelking \cite{Engelking} and Nagata \cite{Nagata}.

It is not hard to see that the results we will discuss in this section are provable within ${\sf RCA}$ (i.e., ${\sf RCA}_0$ plus full induction); however, most basic results in topological dimension theory involve induction argument (see Lemma \ref{lem:coceshrinking} and Lemma \ref{lem:lemma2}), so we will need a few tricks to make the proofs work only with $\Sigma^0_1$-induction.

\subsubsection{Normality}

A space $X$ is {\em normal} if for any (negative codes of) disjoint closed sets $P_0,P_1\subseteq X$, one can find (positive codes of) disjoint open sets $S_0,S_1\subseteq X$ such that $P_0\subseteq S_0$ and $P_1\subseteq S_1$.
A space $X$ is {\em perfectly normal} if for any disjoint closed sets $P_0,P_1\subseteq X$, one can effectively find a (code of) continuous function $g\colon X\to[0,1]$ such that for all $x\in X$ and $i<2$, $x\in C_i$ if and only if $g(x)=i$.
Note that we require effectivity for all notions to reduce the complexity of induction involved in our proofs.
It is known that the effective version of Urysohn's lemma is provable within ${\sf RCA}_0$ as follows:

\begin{Fact}[cf.\ Simpson {\cite[Lemma II.7.3]{Simpson}}]\label{fact:perfectly-normal}
Over ${\sf RCA}_0$, every Polish space is perfectly normal.
\qed
\end{Fact}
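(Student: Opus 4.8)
The plan is to run the classical normalized-distance-function proof of Urysohn's lemma, but reading everything off a \emph{negative} code of a closed set (i.e.\ a code of its open complement), since that is what ${\sf RCA}_0$ provides. First I would record the auxiliary construction behind Simpson \cite[Lemma~II.7.3]{Simpson}: in ${\sf RCA}_0$, from a code $(B(a_n,r_n))_{n\in\NN}$ of an open set $U\subseteq X$ one effectively produces a code of a $1$-Lipschitz function $f_U\colon X\to[0,1]$ with $f_U(x)>0$ iff $x\in U$, namely
\[
  f_U(x)\ =\ \sup_{n\in\NN}\, g_n(x),\qquad g_n(x)\ =\ \min\!\bigl(2^{-n},\ \max(0,\ r_n-d_X(x,a_n))\bigr).
\]
Each $g_n$ is a composition of $1$-Lipschitz maps, hence $1$-Lipschitz, and a pointwise supremum of $1$-Lipschitz functions, when finite, is again $1$-Lipschitz; so $f_U$ requires no limit argument to be seen well-defined and (uniformly) continuous, while the $2^{-n}$ damping makes $\bigl(\max_{n\leq N}g_n(x)\bigr)_N$ a Cauchy sequence with the obvious modulus, so that $f_U$ is a genuine code of a continuous function, produced uniformly from the code of $U$. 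The equivalence $f_U(x)>0\iff x\in U$ is immediate, since $x\in B(a_n,r_n)$ makes $g_n(x)>0$ while $x\notin U$ makes every $g_n(x)=0$.

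Now let $P_0,P_1\subseteq X$ be disjoint closed sets, given by codes of $U_i=X\sminus P_i$, and set $f_i=f_{U_i}$. Since $P_0\cap P_1=\emptyset$ we have $U_0\cup U_1=X$, so $f_0(x)+f_1(x)>0$ for every $x\in X$, and I would put
\[
  g(x)\ =\ \frac{f_0(x)}{f_0(x)+f_1(x)}\,.
\]
Then $g$ takes values in $[0,1]$, and $g(x)=0\iff f_0(x)=0\iff x\notin U_0\iff x\in P_0$, while $g(x)=1\iff f_1(x)=0\iff x\in P_1$; so the two closed sets are exactly the fibres $g^{-1}(0)$ and $g^{-1}(1)$, which is the ``perfect'' strengthening of normality asked for.

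The one point that needs care — and the one I would spend the most effort on, though it is routine — is verifying inside ${\sf RCA}_0$ that $g$ carries a modulus of pointwise continuity computable from the codes of $P_0,P_1$. The denominator $h=f_0+f_1$ is everywhere positive but not uniformly bounded away from $0$, so one works pointwise: from (a code of) $x$ one effectively finds $k$ with $h(x)>2^{-k}$ (a $\Sigma^0_1$ condition on a real presented with a modulus), and on the ball around $x$ where $h>2^{-k-1}$ the quotient of the $1$-Lipschitz functions $f_0$ and $h$ has an explicit local modulus; patching these local data yields the code of $g$. Because $f_U$ is Lipschitz, this argument uses no induction beyond that already available in ${\sf RCA}_0$, which is consistent with the remark above that effectivity is imposed precisely to keep the induction complexity at the $\Sigma^0_1$ level; the Fact is then Urysohn's lemma in ${\sf RCA}_0$ read at the level of exact fibres.
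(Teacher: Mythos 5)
Your proposal is correct and is essentially the proof the paper points to: the Fact is stated without proof, citing Simpson's Lemma II.7.3, whose argument is exactly this construction of a function positive precisely on a coded open set (Simpson uses a damped weighted sum where you use a damped supremum, an immaterial variant) followed by the quotient $f_0/(f_0+f_1)$ with a pointwise modulus extracted from the positivity of the denominator. So there is nothing to add beyond noting that your verification of the code of the quotient in ${\sf RCA}_0$ is the same routine step handled in Simpson's treatment of arithmetic operations on continuous functions.
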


Let $\mathcal{U}$ be a cover of a space $X$.
A cover $\mathcal{V}$ of $X$ is a {\em refinement of} $\mathcal{U}$ if for any $B\in\mathcal{V}$ there is $A\in\mathcal{U}$ such that $B\subseteq A$.
A {\em shrinking of a cover $\mathcal{U}=(U_i)_{i<s}$} of $X$ is a cover $\mathcal{V}=(V_i)_{i<s}$ of $X$ such that $V_i\subseteq U_i$ for any $i<s$.

\begin{Lemma}[${\sf RCA}_0$]\label{lem:coceshrinking}
Let $X$ be a perfectly normal space.
Then, for every finite open cover $\mathcal{U}$ of $X$, one can effectively find a closed shrinking of $\mathcal{U}$. 
\end{Lemma}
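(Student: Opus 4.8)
The plan is to prove the statement by induction on the size $s$ of the cover $\mathcal{U} = (U_i)_{i<s}$, shrinking one member at a time while keeping the family a cover at every stage. The one-step move is the standard separation trick: if $(V_0,\dots,V_{k-1},U_k,\dots,U_{s-1})$ is an open cover, then the set $P = X \setminus \bigl(V_0 \cup \dots \cup V_{k-1} \cup U_{k+1} \cup \dots \cup U_{s-1}\bigr)$ is closed and contained in $U_k$; equivalently $P$ and $X\setminus U_k$ are disjoint closed sets. By perfect normality (Fact~\ref{fact:perfectly-normal}) we can effectively produce a continuous $g_k\colon X\to[0,1]$ vanishing exactly on $P$ and equal to $1$ exactly on $X\setminus U_k$, and then set $V_k = \{x : g_k(x) < 1\} = g_k^{-1}[[0,1)]$, an open set with $P \subseteq V_k$ and $\overline{V_k} \subseteq g_k^{-1}[[0,1)]$'s complement-avoiding $X\setminus U_k$, so $\overline{V_k}\subseteq U_k$. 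Replacing $U_k$ by $V_k$ keeps the family a cover because anything omitted from all the other sets lies in $P\subseteq V_k$. Iterating $s$ times yields an open cover $(V_i)_{i<s}$ with $\overline{V_i}\subseteq U_i$ for all $i$; the closed shrinking is then $(\overline{V_i})_{i<s}$.

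The subtlety the excerpt flags is that this is an induction of length $s$, and the naive induction hypothesis ``there is a partial open cover obtained after $k$ steps'' quantifies over codes of open sets and is not obviously $\Sigma^0_1$, so $\Sigma^0_1$-induction does not immediately apply. The trick I would use is to make the whole process effective and uniform in $k$: rather than asserting the \emph{existence} of the intermediate covers, I would exhibit a single algorithm that, given the code of $\mathcal{U}$ and $k \leq s$, outputs the code of the $k$-th intermediate family $(V_0,\dots,V_{k-1},U_k,\dots,U_{s-1})$, by composing $k$ instances of the effective Urysohn construction. The definition of this algorithm is by primitive recursion on $k$ (available in ${\sf RCA}_0$), which is unproblematic — what needs induction is only the \emph{verification} that the output is a cover of $X$ with the stated containment properties. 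That statement, ``for all $k\leq s$, the $k$-th family (a specific code produced by the algorithm) covers $X$'', is now arithmetical with the set parameters fixed, and in fact the covering property ``$\bigcup_i W_i = X$'' for a fixed finite family of coded open sets can be written as a $\Pi^0_1$ statement (every point of the dense set, hence every point, lies in some $W_i$ with some rational radius of slack) — the key point being that once the algorithm is fixed, the family at stage $k$ is given by a single code depending recursively on $k$, so we are doing induction on a single arithmetical formula, which $\Sigma^0_1$-induction (together with its $\Pi^0_1$ contrapositive form) handles.

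The main obstacle, then, is purely the bookkeeping needed to see that the iterated Urysohn construction has a code uniform in $k$ and that the covering property survives each step — the latter requiring the observation above that the freshly-shrunk $V_k$ swallows exactly the part of $X$ that the remaining old sets $U_{k+1},\dots,U_{s-1}$ and the already-shrunk $V_0,\dots,V_{k-1}$ might have stopped covering. I expect no genuine mathematical difficulty beyond carefully choosing the formula on which induction is performed so that it stays within $\Sigma^0_1$-${\sf IND}$; everything else is the classical argument transcribed into the coded setting of second-order arithmetic, using Fact~\ref{fact:perfectly-normal} as a black box for the separating functions.
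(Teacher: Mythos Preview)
Your overall strategy --- shrink one set at a time via Urysohn, set up the iteration by primitive recursion, then verify the covering property by induction --- is the classical one, but the complexity analysis that is the whole point of the exercise has a real gap. The statement ``$\bigcup_i W_i=X$'' is \emph{not} $\Pi^0_1$, nor arithmetical at all: it reads $(\forall x\in X)(\exists i<s)(x\in W_i)$ with a genuine second-order universal quantifier over points of the Polish space. Your attempted reduction to the countable dense set fails outright --- e.g.\ $[0,1/\sqrt 2)\cup(1/\sqrt 2,1]$ contains every rational in $[0,1]$ yet misses $1/\sqrt 2$ --- so ``every dense point is covered (with slack)'' simply does not imply ``every point is covered''. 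As written you have not justified invoking $\Sigma^0_1$-induction. (There is also a minor slip: since perfect normality gives $g_k^{-1}\{1\}=X\setminus U_k$ \emph{exactly}, your $V_k=g_k^{-1}[0,1)$ is all of $U_k$ and nothing has shrunk; you need a threshold such as $1/2$.)

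The repair is to pull the point inside the induction: fix $x\in X$ and induct on the $\Sigma^0_1$ formula ``$x$ lies in some member of the $k$-th family''. This is exactly what the paper does, though with a different and cleaner construction that you should compare against. Your iterated black-box Urysohn calls have a residual dependency problem: the $k$-th call needs $P_k$ and $X\setminus U_k$ disjoint, which is the \emph{global} covering at stage $k$ and is precisely what is not yet known (so you would have to open the Urysohn box and check it behaves well at $x$ whenever merely $x\notin P_k\cap Q_k$). The paper sidesteps this entirely: it first takes, for each $U_i$ \emph{independently}, a cozero function $g_i$ with $g_i(x)>0\iff x\in U_i$, and then writes down a single explicit recursion $\tilde g_i(x)=g_i(x)/\bigl(g_i(x)+\max\{g'_s(x),g_t(x):s<i<t<k\}\bigr)$, $g'_i(x)=\max\{0,\tilde g_i(x)-1/2\}$, taking $F_i=\{x:\tilde g_i(x)\ge 1/2\}$ as the closed shrinking. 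The only thing to verify is that each denominator is nonzero, and for fixed $x$ that is a $\Sigma^0_1$ statement in $i$, so $\Sigma^0_1$-induction applies without further fuss.
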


\begin{proof}
Let $\mathcal{U}=\{U_i\}_{i<k}$ be a finite open cover.
By perfect normality of $X$, for each $i<k$ one can effectively find a continuous function $g_i\colon X\to[0,1]$ such that $g_i^{-1}(x)>0$ iff $x\in U_i$ for any $x\in X$.
One can effectively construct (a code of) the following sequence $\langle g'_i,\tilde{g}_i\rangle_{i<k}$ of (possibly partial) continuous functions:
\begin{align*}
\tilde{g}_i(x)&=\frac{g_i(x)}{g_i(x)+\max\{g_s'(x),g_t(x):s<i<t<k\}},\\
g'_i(x)&=\max\left\{0,\tilde{g}_i(x)-\frac{1}{2}\right\}.
\end{align*}

Fix $x\in X$.
By $\Sigma^0_1$-induction, we show that the denominator in the definition of $\tilde{g}_i(x)$ is nonzero.
Note that $g_i(x)>0$ for some $i<k$ since $(U_i)_{i<k}$ covers $X$.
This verifies the base case.
We inductively assume that the denominator of $\tilde{g}_i(x)$ is nonzero, that is, $g'_s(x)>0$ for some $s<i$ or $g_t(x)>0$ for some $t\geq i$.
Suppose that the denominator of $\tilde{g}_{i+1}(x)$ is zero, that is, $g'_s(x)=0$ for any $s\leq i$ or $g_t(x)=0$ for any $t>i$.
Note that $g_i'(x)=0$ implies $\tilde{g}_i(x)\leq 1/2$, and therefore, by definition of $\tilde{g}_i$, we have
\[
g_i(x)\leq \max\{g_s'(x),g_t(x):s<i<t<k\}=0. 
\]

However, this contradicts the induction hypothesis.
Hence, $\langle g'_i,\tilde{g}_i\rangle_{i<k}$ defines a sequence of total continuous functions, and for any $x\in X$, we have $g_i'(x)>0$ for some $i<k$ as seen above.
This means that $W_i=\{x\in X:g_i'(x)>0\}=\{x\in X:\tilde{g}_i(x)>1/2\}$ covers $X$.
Therefore, $F_i=\{x\in X:\tilde{g}_i(x)\geq 1/2\}$ also covers $X$.
Now, if $g_i(x)=0$ then clearly $\tilde{g}_i(x)=0\leq 1/2$; hence we have $W_i\subseteq F_i\subseteq U_i$.
This concludes that $(F_i)_{i<k}$ is a closed shrinking of $(U_i)_{i<k}$.
\end{proof}

\subsubsection{Star refinement}

Let $S\subseteq X$ and $\mathcal{U}$ be a cover of a space $X$.
A {\em star of $S$ w.r.t.~$\mathcal{U}$} is defined as follows:
\[{\rm st}(S,\mathcal{U})=\bigcup\{U\in\mathcal{U}:S\cap U\not=\emptyset\}.\]

We define $\mathcal{U}^\star$ by $\{{\rm st}(U,\mathcal{U}):U\in\mathcal{U}\}$.
A {\em star refinement} of a cover $\mathcal{U}$ of $X$ is a cover $\mathcal{V}$ of $X$ such that $\mathcal{V}^\star$ is a refinement of $\mathcal{U}$.
It is known that a space is normal iff every finite open cover has a finite open star refinement.

\begin{Lemma}[${\sf RCA}_0$]\label{lem:star-refinement}
Let $X$ be a normal space.
Then, for every finite open cover $\mathcal{U}$ of $X$, one can effectively find a finite open star refinement of $\mathcal{U}$. 
\end{Lemma}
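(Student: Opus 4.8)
The plan is to reduce the lemma to the elementary combinatorial fact that \emph{a barycentric refinement of a barycentric refinement of a cover $\mathcal{U}$ is a star refinement of $\mathcal{U}$}, where a cover $\mathcal{V}$ is a \emph{barycentric refinement} of $\mathcal{U}$ if $\{{\rm st}(x,\mathcal{V}):x\in X\}$ refines $\mathcal{U}$. Granting this, it suffices to show, over ${\sf RCA}_0$, that every finite open cover of a (perfectly) normal space has a finite open barycentric refinement obtained effectively, and then to apply that construction twice.

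The engine is as follows. Given a finite open cover $\mathcal{U}=\{U_i\}_{i<k}$, first apply Lemma~\ref{lem:coceshrinking} to get a finite closed shrinking $\{F_i\}_{i<k}$, so $F_i\subseteq U_i$ and $\bigcup_{i<k}F_i=X$; this is the only step that is not essentially finitary, and it is already carried out in Lemma~\ref{lem:coceshrinking}. For each nonempty $S\subseteq\{0,\dots,k-1\}$ set
\[
V_S=\Bigl(\bigcap_{i\in S}U_i\Bigr)\cap\Bigl(X\setminus\bigcup_{i\notin S}F_i\Bigr),
\]
which is open because $\bigcup_{i\notin S}F_i$ is a finite union of closed sets, and a code for the finite family $\mathcal{V}=\{V_S\}_{\emptyset\neq S\subseteq k}$ (with $2^k-1$ members, $2^k$ being available in ${\sf RCA}_0$) is read off effectively from codes for $\mathcal{U}$ and $\{F_i\}$. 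One checks directly that for $x\in X$ one has $x\in V_S$ exactly when $\{i<k:x\in F_i\}\subseteq S\subseteq\{i<k:x\in U_i\}$. Since $\{F_i\}$ covers $X$, for each $x$ there is $i_0<k$ with $x\in F_{i_0}$; then $x\in V_S$ for $S=\{i<k:x\in U_i\}$ (which exists by bounded $\Sigma^0_1$-comprehension), so $\mathcal{V}$ covers $X$, and moreover every $V_S$ with $x\in V_S$ satisfies $i_0\in S$, hence $V_S\subseteq U_{i_0}$ and ${\rm st}(x,\mathcal{V})\subseteq U_{i_0}$. Thus $\mathcal{V}$ is a finite open barycentric refinement of $\mathcal{U}$.

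Applying the engine once more to $\mathcal{V}$ produces a finite open barycentric refinement $\mathcal{W}$ of $\mathcal{V}$, whose (doubly exponential in $k$) index set is still available in ${\sf RCA}_0$. It then remains to verify, inside ${\sf RCA}_0$, that $\mathcal{W}^\star$ refines $\mathcal{U}$. Given $W_0\in\mathcal{W}$, either $W_0=\emptyset$, in which case ${\rm st}(W_0,\mathcal{W})=\emptyset$ is contained in any member of $\mathcal{U}$; or we pick $a\in W_0$, choose $V_0\in\mathcal{V}$ with ${\rm st}(a,\mathcal{W})\subseteq V_0$ (whence $a\in V_0$) and $U_0\in\mathcal{U}$ with ${\rm st}(a,\mathcal{V})\subseteq U_0$ (whence $V_0\subseteq U_0$), and then, for any $W\in\mathcal{W}$ meeting $W_0$ at a point $b$, we chase $b\in W_0\subseteq V_0$, $\;W\cup W_0\subseteq{\rm st}(b,\mathcal{W})\subseteq V_b$ for a suitable $V_b\in\mathcal{V}$, $\;a\in W_0\subseteq V_b$, hence $V_b\subseteq{\rm st}(a,\mathcal{V})\subseteq U_0$, so $W\subseteq V_b\subseteq U_0$; thus ${\rm st}(W_0,\mathcal{W})\subseteq U_0$. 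Since this is a legitimate case split (we never need to \emph{decide} emptiness of $W_0$), the verification uses no induction, and $\mathcal{W}$ is the desired effectively obtained finite open star refinement.

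I expect the main difficulty to be the complexity bookkeeping rather than any new idea: one must confirm that each ingredient — the exponentially and doubly exponentially large index sets, the bounded $\Sigma^0_1$-comprehensions defining $\{i<k:x\in F_i\}$ and $\{i<k:x\in U_i\}$, and the universally quantified assertions that $\mathcal{V}$ covers $X$ and that $\mathcal{W}^\star$ refines $\mathcal{U}$ — stays within ${\sf RCA}_0$ with $\Sigma^0_1$-induction only, so that the closed-shrinking Lemma~\ref{lem:coceshrinking} remains the single nontrivial appeal. A secondary caveat is that this use of Lemma~\ref{lem:coceshrinking} relies on perfect normality (hence, via Fact~\ref{fact:perfectly-normal}, applies to all Polish spaces, which is the only case we need); to obtain the lemma in the stated generality for merely normal spaces one should first establish the corresponding effective closed-shrinking lemma, itself requiring a $\Sigma^0_1$-induction trick in the spirit of Lemma~\ref{lem:coceshrinking}.
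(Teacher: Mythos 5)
Your proof is correct, but it takes a genuinely different route from the paper's. The paper obtains a star refinement in a single step: it extracts from the proof of Lemma~\ref{lem:coceshrinking} both a closed shrinking $(F_i)_{i<k}$ and an open shrinking $(W_i)_{i<k}$ with $W_i\subseteq F_i\subseteq U_i$, and takes the meet $\mathcal{W}\wedge\bigwedge_{i<k}\{X\setminus F_i,U_i\}$; a member of the form $W_\ell\cap\bigcap_{i<k}V_i$ lies inside $F_\ell$, so any member meeting it must have chosen $U_\ell$ rather than $X\setminus F_\ell$ in the $\ell$-th coordinate, and hence its star is contained in $U_\ell$. Your construction drops the open shrinking: your $V_S$ is exactly the paper's $\bigcap_{i\in S}U_i\cap\bigcap_{i\notin S}(X\setminus F_i)$ without the $W_\ell$ factor, which by itself is only a barycentric (point-star) refinement, and you then iterate, invoking the classical fact that a barycentric refinement of a barycentric refinement is a star refinement; your direct verification of that fact is sound, needs no induction, and the empty/nonempty case split is legitimate classical reasoning in ${\sf RCA}_0$. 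What the paper's tagging by $W_\ell$ buys is economy: one application of Lemma~\ref{lem:coceshrinking}, a cover of size $k\cdot 2^k$, and no auxiliary combinatorial lemma. What your route buys is modularity (the engine is the textbook barycentric construction and the squared-refinement lemma is reusable), at the cost of a second application of Lemma~\ref{lem:coceshrinking} and a doubly exponential cover, both harmless in ${\sf RCA}_0$ since exponentiation and bounded $\Sigma^0_1$-comprehension are available. Your closing caveat — that Lemma~\ref{lem:coceshrinking} is stated for perfectly normal spaces while the present lemma is stated for merely normal ones — applies verbatim to the paper's own proof as well, and is immaterial for the intended applications, where $X$ is Polish and hence perfectly normal by Fact~\ref{fact:perfectly-normal}.
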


\begin{proof}
Given a finite open cover $\mathcal{U}=\{U_i\}_{i<k}$ of $X$, as in the proof of Lemma \ref{lem:coceshrinking}, one can effectively find a closed shrinking $\{F_i\}_{i<k}$ and an open shrinking $\mathcal{W}=\{W_i\}_{i<k}$ such that $W_i\subseteq F_i\subseteq U_i$ for each $i<k$.
Then, $\mathcal{V}_i=\{X\setminus F_i,U_i\}$ is an open cover of $X$.
We define $\mathcal{V}$ as the following open cover of $X$:
\[\mathcal{V}=\mathcal{W}\wedge\bigwedge_{i<k}\mathcal{V}_i:=\left\{W\cap\bigcap_{i<k}V_i:W\in\mathcal{W},\;V_i\in\mathcal{V}_i\right\}.\]

We claim that if $V\in\mathcal{V}$ is of the form $W_{\ell}\cap\bigcap_{i<k}V_i$, then ${\rm st}(V,\mathcal{V})\subseteq U_{\ell}$.
For any $V^*\in\mathcal{V}$ of the form $W_m\cap\bigcap_{i<k}V^*_i$, if $V\cap V^*\not=\emptyset$, then $V^*_\ell\not=X\setminus F_\ell$ since $V\subseteq W_\ell\subseteq F_\ell$.
Therefore, $V^*\subseteq V^*_\ell=U_\ell$.
Consequently, $\mathcal{V}$ is an open star refinement of $\mathcal{U}$ as desired.
\end{proof}

We also define $\mathcal{U}^\triangle$ by $\{{\rm st}(\{x\},\mathcal{U}):x\in X\}$.
A {\em point-star refinement} (or a {\em barycentric refinement}) of  a cover $\mathcal{U}$ of $X$ is a cover $\mathcal{V}$ of $X$ such that $\mathcal{V}^\triangle$ is a refinement of $\mathcal{U}$.
Clearly, every star refinement is a point-star refinement.

\subsubsection{Absolute extensor}

A space $K$ is called an {\em absolute extensor} for a space $X$ if for any continuous map $f\colon P\to K$ on a closed set $P\subseteq X$, one can find a continuous map $g\colon X\to K$ extending $f$, that is, $g\upto P=f\upto P$.
It is known that the topological dimension (and the cohomological dimension) of a normal space can be restated in the context of the absolute extensor.
Classically, it is known that the covering dimension of $X$ is at most $n$ if and only if the $n$-sphere $\mathbb{S}^n$ is an absolute extensor for $X$ (cf.\ \cite[Theorem 1.9.3]{Engelking} or \cite[Theorem III.2]{Nagata}).
This equivalence is due to Eilenberg-Otto.
To prove the equivalence, Eilenberg-Otto introduced the notion of an essential family.


We will need effectivity for inessentiality to reduce the complexity of induction.
Therefore, instead of considering essentiality of a family, we consider the following notion: 
A space $X$ is {\em $(n+1)$-inessential} if for any sequence $(A_i,B_i)_{i<n+1}$ of disjoint pairs of closed sets in $X$, one can effectively find a sequence $(U_i,V_i)_{i<n+1}$ of disjoint open sets in $X$ such that $A_i\subseteq U_i$ and $B_i\subseteq V_i$ for each $i\leq n$ and $(U_i\cup V_i)_{i<n+1}$ covers $X$.

\begin{Lemma}[${\sf RCA}_0$]\label{lem:lemma1}
Let $X$ be a Polish space.
If the $n$-sphere $\mathbb{S}^n$ is an absolute extensor for $X$, then $X$ is $(n+1)$-inessential.
\end{Lemma}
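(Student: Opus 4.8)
The plan is to follow the classical Eilenberg–Otto argument connecting $\mathbb{S}^n$ being an absolute extensor with the separation-of-partitions property, but carried out so that each application of the absolute extensor hypothesis is used in a single, uniform sweep rather than iteratively, keeping the induction at the $\Sigma^0_1$ level. First I would fix a sequence $(A_i,B_i)_{i<n+1}$ of disjoint pairs of closed sets in $X$. Using perfect normality (Fact \ref{fact:perfectly-normal}), for each $i<n+1$ one can effectively find a continuous function $h_i\colon X\to[-1,1]$ with $h_i^{-1}(-1)\supseteq A_i$, $h_i^{-1}(1)\supseteq B_i$ — more precisely, with $h_i(x)=-1$ iff $x\in A_i$ and $h_i(x)=1$ iff $x\in B_i$, again invoking the perfectly-normal machinery. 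Bundling these together gives a single continuous map $h=(h_0,\dots,h_n)\colon X\to[-1,1]^{n+1}$. The set $P=h^{-1}(\partial([-1,1]^{n+1}))$, the preimage of the boundary of the cube, is closed in $X$, and $\partial([-1,1]^{n+1})$ is homeomorphic (effectively, via an explicit radial homeomorphism) to $\mathbb{S}^n$.

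Next, since $\mathbb{S}^n\cong\partial([-1,1]^{n+1})$ is an absolute extensor for $X$, the map $h\upto P\colon P\to\partial([-1,1]^{n+1})$ extends to a continuous map $g\colon X\to\partial([-1,1]^{n+1})$; the code of $g$ is obtained effectively from the code of $h\upto P$ via the absolute extensor hypothesis (which we have taken to be effective). Writing $g=(g_0,\dots,g_n)$, for each $i<n+1$ set
\[
U_i=\{x\in X:g_i(x)<0\},\qquad V_i=\{x\in X:g_i(x)>0\}.
\]
These are disjoint open sets. For each $x\in X$ we have $g(x)\in\partial([-1,1]^{n+1})$, so some coordinate $g_i(x)$ equals $\pm 1$, hence $x\in U_i\cup V_i$; thus $(U_i\cup V_i)_{i<n+1}$ covers $X$ (this is an immediate, non-inductive observation once $g$ maps into the boundary). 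It remains to check $A_i\subseteq U_i$ and $B_i\subseteq V_i$: if $x\in A_i$ then $h_i(x)=-1$, so $h(x)\in\partial([-1,1]^{n+1})$, hence $x\in P$ and $g_i(x)=h_i(x)=-1<0$, giving $x\in U_i$; symmetrically for $B_i$ and $V_i$.

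The step I expect to require the most care is ensuring everything is genuinely effective and that the only place induction enters is controlled: the passage from $h$ to the closed set $P$, from $P$ to $g$ via the absolute extensor, and from $g$ to the pairs $(U_i,V_i)$ are each uniform, so the only nontrivial verification — that the cube boundary preimage condition is met and that $(U_i\cup V_i)_i$ covers $X$ — is a finite, quantifier-free-modulo-the-data check requiring no induction beyond what Fact \ref{fact:perfectly-normal} already costs. The one subtlety is that "$g_i(x)<0$" and "$g_i(x)>0$" must be rendered as legitimate open-set codes in the sense of the preliminaries, and that on $P$ the identity $g_i=h_i$ holds in the strong sense needed to conclude $g_i(x)=-1$ exactly on a superset of $A_i$; this is where I would be slightly more careful than the classical account, but it introduces no comprehension beyond $\Delta^0_1$.
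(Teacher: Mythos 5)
Your proof is correct and takes essentially the same approach as the paper's: construct Urysohn-type functions via perfect normality (Fact~\ref{fact:perfectly-normal}), restrict the resulting map to the closed set where it hits the boundary of the cube, extend over $X$ via the absolute-extensor hypothesis, and read off the $(U_i,V_i)$ from coordinate sign conditions. The only differences are cosmetic — you use $[-1,1]^{n+1}$ and thresholds at $0$ where the paper uses $\mathbb{I}^{n+1}=[0,1]^{n+1}$ and thresholds at $1/2$, and you describe the domain of the map to be extended as $P=h^{-1}(\partial[-1,1]^{n+1})$ rather than directly as $\bigcup_{i<n+1}(A_i\cup B_i)$, but these are the same set once the Urysohn functions are chosen with the ``iff'' property.
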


\begin{proof}
As the boundary $\partial {\interval}^{n+1}$ of the $(n+1)$-hypercube $\mathbb{I}^{n+1}$ is homeomorphic to $\mathbb{S}^n$, we can assume that $\partial {\interval}^{n+1}$ is an absolute extensor for $X$.
Given a sequence $(A_i,B_i)_{i<n+1}$ of disjoint pairs of closed sets, one can define $f\colon\bigcup_{i<n+1}(A_i\cup B_i)\to\partial {\interval}^{n+1}$ such that $(\pi_i\circ f)^{-1}\{0\}=A_i$ and $(\pi_i\circ f)^{-1}\{1\}=B_i$ by perfect normality (Fact \ref{fact:perfectly-normal}), where $\pi_i$ is the projection into the $i$th coordinate.
Then, by our assumption, we have $g\colon X\to\partial {\interval}^{n+1}$ which agrees with $f$ on $\bigcup_{i<n+1}(A_i\cup B_i)$.
Define $U_i:=(\pi_i\circ g)^{-1}[0,1/2)$ and $V_i:=(\pi_i\circ g)^{-1}(1/2,1]$.
Then, $(U_i,V_i)_{i<n+1}$ covers $X$ since the range of $g$ is contained in $\partial\mathbb{I}^{n+1}$.
Hence, the sequence $(U_i,V_i)$ witnesses the condition of $(n+1)$-inessentiality.
\end{proof}

\subsubsection{Covering dimension}

Let $\mathcal{U}$ be a cover of a space $X$.
We say that the {\em order of $\mathcal{U}$ is at most $n$} if for any $U_0,U_1,\dots,U_{n+1}\in\mathcal{U}$ we have $\bigcap_{i<n+2}U_i=\emptyset$.
A space $X$ has the {\em covering dimension at most $n$} if for any finite open cover of $X$, one can effectively find a finite open refinement of order at most $n$.

\begin{Lemma}[${\sf RCA}_0$]\label{lem:lemma2}
Let $X$ be a Polish space.
If $X$ is $(n+1)$-inessential, then the covering dimension of $X$ is at most $n$.
\end{Lemma}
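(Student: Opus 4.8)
The plan is to prove the classical Eilenberg--Otto--type implication---that $(n+1)$-inessentiality forces covering dimension $\leq n$---while being careful that every intermediate construction is effective and that the only induction used is $\Sigma^0_1$-induction (or can be packaged as such). I would proceed as follows. Given a finite open cover $\mathcal{U}=\{U_i\}_{i<k}$ of $X$, first apply Lemma~\ref{lem:coceshrinking} to obtain a closed shrinking $\{F_i\}_{i<k}$ and then (shrinking once more) an open shrinking $\{W_i\}_{i<k}$ and a further closed shrinking $\{G_i\}_{i<k}$ with $G_i\subseteq W_i\subseteq \overline{W_i}\subseteq U_i$; the pairs of disjoint closed sets I will feed into inessentiality are $(A_i,B_i)=(G_i,\,X\setminus W_i)$ for $i<k$. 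A subtlety is that $(n+1)$-inessentiality as defined only handles a length-$(n+1)$ sequence of pairs, whereas the cover has $k$ pieces with $k$ possibly larger than $n+1$; so before invoking it I would first pass to a suitable reorganization. The standard device is: it suffices to treat the case $k=n+2$, because an arbitrary finite cover can be reduced to the case of $n+2$ sets by an effective grouping/merging argument (combine $U_{n+1},\dots,U_{k-1}$ appropriately, or iterate), and then the order-$\leq n$ refinements of the pieces can be amalgamated. I would carry out the $k=n+2$ case in detail.

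So assume $\mathcal{U}=\{U_i\}_{i\leq n+1}$. Take closed shrinking $\{F_i\}_{i\leq n+1}$, then an open shrinking $\{W_i\}$ and closed shrinking $\{G_i\}$ with $G_i\subseteq W_i\subseteq \overline{W_i}\subseteq F_i\subseteq U_i$. For each $i\leq n$ set $A_i=G_i$, $B_i=X\setminus W_i$; these are disjoint closed sets (using that $\{W_i\}$ still covers and perfect normality to get genuine closed complements---or rather, $X\setminus W_i$ is closed since $W_i$ is open, and $G_i\cap(X\setminus W_i)=\emptyset$). Note I only use $n+1$ of the pairs (indices $\leq n$); the last set $U_{n+1}$ is handled automatically. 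Apply $(n+1)$-inessentiality to get disjoint open $(U_i',V_i')_{i\leq n}$ with $A_i\subseteq U_i'$, $B_i\subseteq V_i'$, and $\bigcup_{i\leq n}(U_i'\cup V_i')=X$. Now define the refinement: for each $i\leq n$ let the closed set $L_i=X\setminus(U_i'\cup V_i')$, so $\bigcap_{i\leq n}L_i=\emptyset$ and each $L_i\subseteq W_i\cap(X\setminus G_i)$ roughly speaking---the point is that $L_i$ separates $G_i$ from $X\setminus W_i$ inside $W_i\subseteq U_i$. The refinement $\mathcal{V}$ consists of: the open sets $U_{n+1}\cap(\text{a shrinking complement of the }L_i\text{'s})$ together with, for each $i\leq n$, the two open pieces of $U_i\setminus L_i$ on either side of the separator. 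One checks, by an elementary combinatorial/Boolean computation on which of the $2^{n+1}$ "sign patterns'' relative to the separators a point can have, that the order of $\mathcal{V}$ is at most $n$: a point lying in $n+2$ members of $\mathcal{V}$ would have to avoid all separators $L_0,\dots,L_n$ in a way that is impossible since $\bigcap L_i=\emptyset$. I would then verify $\mathcal{V}$ refines $\mathcal{U}$ (immediate from the construction, each piece sits inside some $U_i$) and that $\mathcal{V}$ is produced effectively from $\mathcal{U}$ (all operations---shrinkings via Lemma~\ref{lem:coceshrinking}, inessentiality, finite Boolean combinations of open/closed codes---are effective, and the complexity of any induction is kept at $\Sigma^0_1$ because $n$ is a fixed external parameter and the length-$(n+1)$ sequences have standard-finite length).

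The main obstacle, and the place I expect to spend the most care, is the reduction from arbitrary finite $k$ to $k=n+2$ together with the verification that the resulting induction on the number of cover elements is genuinely $\Sigma^0_1$. The naive Eilenberg--Otto argument inducts on $k$ (or splices together $k-n-1$ separations), and one must check that the statement being inducted on---``the cover $\{U_0,\dots,U_{j}\}$ merged into $n+2$ groups admits an order-$\leq n$ refinement''---is $\Sigma^0_1$, i.e.\ that it only asserts the existence of a code for a finite open refinement with an easily-checkable order condition, and that the inductive step is uniformly effective. A clean way to finesse this is to avoid induction on $k$ altogether: group $U_{n+1},\dots,U_{k-1}$ into a single open set $U_{n+1}^\ast=\bigcup_{i\geq n+1}U_i$, run the $k=n+2$ argument to get an order-$\leq n$ refinement $\mathcal{V}^\ast$ of $\{U_0,\dots,U_n,U_{n+1}^\ast\}$, then refine each $V\in\mathcal{V}^\ast$ with $V\subseteq U_{n+1}^\ast$ by intersecting with the original $U_i$'s ($i\geq n+1$)---but this last step can increase the order, so it has to be combined with a shrinking so that each such $V$ is itself covered in an order-preserving way; iterating this controlled refinement $k-n-1$ times (a standard-finite number of steps, hence no metatheoretic induction needed, just a finite composition of effective operations) yields the result. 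I would present this iterated-merging version to keep the induction burden minimal, remarking that full induction would of course also suffice as noted before Fact~\ref{fact:perfectly-normal}.
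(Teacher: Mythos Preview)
Your base case $k=n+2$ is essentially Claim~\ref{claim:inessential} in the paper, though you make it harder than necessary: for an $(n+2)$-element cover, ``order $\leq n$'' is exactly ``total intersection empty,'' so one only needs an open \emph{shrinking} $(W_i)_{i<n+2}$ with $\bigcap_i W_i=\emptyset$, not the elaborate $2^{n+1}$-sign-pattern refinement you sketch.

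The genuine gap is in your reduction from arbitrary $k$ to $k=n+2$. Merging $U_{n+1},\dots,U_{k-1}$ into a single $U^\ast_{n+1}$, getting an order-$\leq n$ refinement $\mathcal{V}^\ast$ of $\{U_0,\dots,U_n,U^\ast_{n+1}\}$, and then intersecting the $V\subseteq U^\ast_{n+1}$ back with the original $U_i$'s does increase the order, as you note --- and your fix (``combine with a shrinking \dots\ iterate $k-n-1$ times'') is not actually specified. I do not see what invariant you would maintain through those iterations that forces the final order down to $n$; the difficulty is precisely that the pieces you split off from $U^\ast_{n+1}$ can overlap among themselves in arbitrary $(n+2)$-fold patterns, and nothing in your scheme kills those.

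The paper avoids this entirely by a different iteration: rather than grouping, it enumerates all $(n+2)$-element subsets $D_0,\dots,D_{b-1}$ of the index set (so $b=\binom{s}{n+2}$) and, at stage $e$, applies the $k=n+2$ claim to the current cover restricted to the indices in $D_e$, producing a shrinking with $\bigcap_{i\in D_e}U^e_i=\emptyset$. After all $b$ stages every $(n+2)$-tuple has been killed, so the final shrinking has order $\leq n$. The $\Sigma^0_1$-induction you worry about is then an induction on $e<b$, and the paper packages the one-step construction $(u_i)_{i<s}\mapsto(v_i,\tilde v_i)_{i<s}$ as an explicit $\Sigma^0_1$-described transformation of codes of continuous functions, so that ``$(U^e_i)_{i<s}$ covers $X$'' and ``$U^d_i\subseteq U^e_i$'' become $\Sigma^0_1$ statements about values $\tilde g^e_i(x)$ at a fixed $x$. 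That bookkeeping is the real content of the lemma in ${\sf RCA}_0$, and it does not appear in your outline.
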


\begin{proof}
We first show the following claim.

\begin{Claim}[${\sf RCA}_0$]\label{claim:inessential}
If $X$ is $(n+1)$-inessential, then for any open cover $\mathcal{U}=(U_i)_{i<n+2}$ of $X$, one can effectively find an open shrinking $\mathcal{W}=(W_i)_{i<n+2}$ of $\mathcal{U}$ such that $\bigcap\mathcal{W}=\emptyset$.
\end{Claim}

\begin{proof}
We follow the argument in Engelking \cite[Theorem 1.7.9]{Engelking}.
Given an open cover $\mathcal{U}=(U_i)_{i<n+2}$ of $X$, pick a closed shrinking $(F_i)_{i<n+2}$ by Lemma \ref{lem:coceshrinking}.
Then, consider the sequence $(U_i,X\setminus F_i)_{i<n+1}$ of open covers.
By $(n+1)$-inessentiality,  one can find a sequence of disjoint open sets $(W_i,V_i)_{i<n+1}$ in $X$ such that $W_i\subseteq U_i$, $V_i\subseteq (X\setminus F_i)$ and $\bigcup_{i<n+1}W_i\cup V_i$ covers $X$.
Define $W_{n+1}:=U_{n+1}\cap\bigcup_{i<n+1}V_i$.
As $F_{n+1}\subseteq U_{n+1}$, we have the following:
\[\bigcup\mathcal{W}=\left[\bigcup_{i<n+1}W_i\cup U_{n+1}\right]\cap\left[\bigcup_{i<n+1}W_i\cup\bigcup_{i<n+1}V_i\right]\supseteq\bigcup_{i<n+2}F_i=X.\]

Thus, $\mathcal{W}=(W_i)_{i<n+2}$ is an open cover of $X$.
Moreover, as $V_i$ and $W_i$ are disjoint, we have
\[\bigcap_{i<n+2}W_i=\bigcap_{i<n+1}W_i\cap\left[U_{n+1}\cap\bigcup_{i<n+1}V_i\right]\subseteq\bigcap_{i<n+1}W_i\cap\bigcup_{i<n+1}V_i=\emptyset.\]

This concludes that $\mathcal{W}$ is an open refinement of $\mathcal{U}$ of order at most $n$ as desired.
\end{proof}

We then follow the argument in Engelking \cite[Theorem 1.6.10]{Engelking}.
Suppose that $\mathcal{U}=\{U_i\}_{i<s}$ is a finite open cover of $X$.
Let $[s]^{n+2}$ be the collection of all set $D\subseteq s$ such that $|D|=n+2$, and $D_e$ be the $e$-th element in $[s]^{n+2}$.
Put $b:=|[s]^{n+2}|=\binom{s}{n+2}$.
Set $U^{-1}_i=U_i$.
We will construct a sequence $(F_i^e,U_i^e)_{e<b}$ of pairs of a closed set $F_i^e$ and an open set $U_i^e$ such that $(U^e_i)_{i<s}$ is an open shrinking of $\mathcal{U}$, and moreover,
\begin{align*}
(\forall i<s)\;U^{e}_i\subseteq F^{e}_i\subseteq U^{e-1}_i,\mbox{ and }\bigcap_{i\in D_e}U_i^e=\emptyset.
\end{align*}

Given a sequence $\mathcal{U}=(U_i)_{i<s}$ of open set which is given as cozero sets of $(u_i)_{i<s}$, by Claim \ref{claim:inessential}, one can effectively find a code of a sequence $(w_i)_{i\in D_e}$ of partial continuous functions such that, whenever $\mathcal{U}$ is a cover of $X$, $w_i$ is total, the cozero sets $\mathcal{W}=(W_i)_{i\in D_e}$ of $(w_i)_{i\in D_e}$ are an open shrinking of $(U_i)_{i\in D_e}$, and $\mathcal{U}':=(U_i,W_j:i\in D_e,\,j\not\in D_e)$ covers $X$.
Put $u_i'=u_i$ for $i\not\in D_e$ and $u_i'=w_i$ for $i\in D_e$.
Then, $\mathcal{U}'$ is given as a collection of cozero sets of $u_i'$'s.

Then, by Lemma \ref{lem:coceshrinking}, one can effectively find a code of a sequence $(\tilde{v}_i)_{i<s}$ of partial continuous functions such that, whenever $\mathcal{U}$ is a cover of $X$, $v_i$ is total, $u'_i(x)=0$ implies $\tilde{v}_i(x)=0$, and $V_i=\{x:\tilde{v}_i(x)>1/2\}$ covers $X$.
Put $F_i=\{x:\tilde{v}_i(x)\geq 1/2\}$, and $v_i(x)=\max\{0,\tilde{v}_i(x)-1/2\}$.

It is clear that if $\mathcal{U}$ is an open cover of $X$, then $(V_i)_{i<s}$ is an open shrinking of $\mathcal{U}$, and moreover,
\[V_i\subseteq F_i\subseteq U_i,\mbox{ and }\bigcap_{i\in D_e}V_i=\emptyset.\]

To reduce the complexity of induction, we now note that the construction $(u_i)_{i<s}\mapsto(v_i,\tilde{v}_i)_{i<s}$ is effective, i.e., has an explicit $\Sigma^0_1$-description $\Phi$.
Hence, one can effectively obtain (a code of) a sequence $(\tilde{g}_i^e,g_i^e)_{e,i}$ such that $(u_i)_{i<s}=(g_i^{e-1})$ and $(\tilde{v}_i,v_i)_{i<s}=(\tilde{g}_i^e,g_i^{e})_{i<s}$ satisfies the $\Sigma^0_1$-condition $\Phi$ describing the above construction.
Then, define $U^e_i=\{x:\tilde{g}^e_i(x)>1/2\}$ and $F^e_i=\{x:\tilde{g}^e_i(x)\geq 1/2\}$.

We first check that $(U^e_i)_{i<s}$ forms an open cover for any $e<b$.
Fix $x\in X$.
By $\Sigma^0_1$-induction, one can easily show that for any $e$, $x\in U^e_i$ for some $i<s$.
Next, we see that $U^d_i\subseteq U^e_i$ for any $e\leq d<b$.
Fix $x\in X$.
Note that $g^{e-1}_i(x)=0$ implies $\tilde{g}^e_i(x)<1/2$, and this condition is $\Sigma^0_1$.
For $d>e$, inductively assume that $g^{e-1}_i(x)=0$ implies $\tilde{g}^d_i(x)<1/2$.
Then $\tilde{g}^d_i(x)<1/2$ clearly implies $g^d_i(x)=0$, and therefore, $\tilde{g}^{d+1}_i(x)<1/2$.
By $\Sigma^0_1$-induction, we obtain that $g^{e-1}_i(x)=0$ implies $\tilde{g}^d_i(x)<1/2$ for any $d>e$.
Hence, $g^{e-1}_i(x)=0$ implies $g^d_i(x)=0$ for $d>e$, which implies that $U^d_i\subseteq U^e_i$ for any $e\leq d<b$.

Finally we put $V_i=U^{b-1}_i$.
We have shown that $(V_i)_{i<s}$ is an open shrinking of $\mathcal{U}$.
It remains to show that the order of $(V_i)_{i<s}$ is at most $n$.
To see this, it suffices to show that for any $e$, $\bigcap_{i\in D_e}V_i=\emptyset$.
As shown above, $\mathcal{U}^{e-1}=(U^{e-1}_i)_{i<s}$ forms an open cover.
Therefore, $(U^{e}_i)_{i<s}$ is an open shrinking of $\mathcal{U}^{e-1}$ such that $\bigcap_{i\in D_e}U^e_i=\emptyset$.
Then, as seen before, we have $V_i=U^{b-1}_i\subseteq U^e_i$ for any $i<s$.
Therefore, $\bigcap_{i\in D_e}V_i=\emptyset$ as desired.
\end{proof}

\subsection{N\"obeling's imbedding theorem}

The {\em $n$-dimensional N\"obeling space} $N^n$ is a subspace of ${\interval}^{2n+1}$ consisting of points with at most $n$ rational coordinates.
The N\"obeling imbedding theorem says that an $n$-dimensional separable metrizable space is topologically embedded into the $n$-dimensional N\"obeling space.
We will see that the N\"obeling imbedding theorem is provable in ${\sf RCA}_0$ in the following sense:

\begin{Theorem}[${\sf RCA}_0$]\label{thm:Nobeling}
If the covering dimension of a Polish space $X$ is at most $n$, then $X$ can be topologically embedded into the $n$-dimensional N\"obeling space.

More precisely, there is a topological embedding $f$ of $X$ into $\interval^{2n+1}$ such that for any $x\in X$, at most $n$ coordinates of $f(x)$ are rational.
\end{Theorem}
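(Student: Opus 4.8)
The plan is to run the classical Baire-category proof of the N\"obeling--Pontryagin embedding theorem (cf.\ Engelking, Theorem 1.11.4), but to effectivize it into an explicit Cauchy construction so that it goes through in ${\sf RCA}_0$ without any appeal to the Baire category theorem for the (generally non-separable) space $C(X,\interval^{2n+1})$. Fix a compatible metric $d\le 1$ on $X$ and a countable dense set $D=\{a_i:i\in\mathbb{N}\}$. I will build the embedding $f=(f_0,\dots,f_{2n})\colon X\to\interval^{2n+1}$ as a uniform limit $f=\lim_k h^{(k)}$ of continuous maps $h^{(k)}\colon X\to\interval^{2n+1}$, each given together with an explicit modulus, with $\|h^{(k+1)}-h^{(k)}\|_\infty<2^{-k-2}$, so that $f$ is total continuous with a modulus and $\|f-h^{(k)}\|_\infty\le 2^{-k-1}$. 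The sequence is driven by a list of requirements processed one per stage, of two kinds. Type (E$_{i,j,p,q}$): for rational balls $B(a_i,p)$, $B(a_j,q)$ with $d(a_i,a_j)>p+q$, require that from some stage on $h^{(k)}$ keeps the images of $\overline{B(a_i,p)}$ and $\overline{B(a_j,q)}$ at distance $>\gamma$ for a fixed positive rational $\gamma$, and that $h^{(k)}$ carries a fixed modulus witnessing that small balls of $\interval^{2n+1}$ have small preimages; together these make $f$ a topological embedding and, along the way, produce the partial inverse $g$ with $g\circ f=\mathrm{id}$ demanded by the coding of embeddings. Type (H$_{J,\bar q}$): for each $(n{+}1)$-element $J\subseteq\{0,\dots,2n\}$ and rational tuple $(q_j)_{j\in J}$, require $\bigcap_{j\in J}f_j^{-1}(q_j)=\emptyset$, i.e.\ the image avoids the $n$-dimensional rational hyperplane $L_{J,\bar q}=\{x:x_j=q_j,\ j\in J\}$; since $\interval^{2n+1}\setminus N^n=\bigcup_{J,\bar q}L_{J,\bar q}$ is a countable union, meeting all (H$_{J,\bar q}$) forces $f[X]\subseteq N^n$, which is exactly the ``at most $n$ rational coordinates'' conclusion.

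The heart of the construction is the single-step perturbation: meeting the next requirement while moving $h^{(k)}$ by less than $2^{-k-2}$ and not injuring the finitely many requirements already met (each of which is an \emph{open} condition held with a fixed rational slack, so a sufficiently small perturbation is harmless). For an (H$_{J,\bar q}$)-requirement this is a general-position move: $L_{J,\bar q}$ is an $n$-dimensional affine subspace of $\interval^{2n+1}$ and $\dim X\le n$ by hypothesis, and $n+n<2n+1$ leaves the room to push the image off $L_{J,\bar q}$; concretely, using the finite-open-cover characterization of $\dim X\le n$ one replaces $h^{(k)}$ on the relevant piece by a map affine on the nerve of an order-$\le n$ refinement and in general position with respect to $L_{J,\bar q}$, then patches via a partition of unity built from Fact \ref{fact:perfectly-normal} and Lemma \ref{lem:coceshrinking}. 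The (E$_{i,j,p,q}$)-requirements are handled identically: separating the images of two disjoint closed sets of dimension $\le n$ inside $\interval^{2n+1}$, and improving the preimage-modulus, are again general-position perturbations since $n+n<2n+1$. To stay inside ${\sf RCA}_0$ every perturbation step will be made uniformly effective --- an explicit $\Sigma^0_1$ operator sending a code for $h^{(k)}$ (and the slack data) to a code for $h^{(k+1)}$, exactly as in the proof of Lemma \ref{lem:lemma2} --- so that the verifications that $f$ is total continuous, is an embedding, and meets every (H$_{J,\bar q}$) each reduce to a single $\Sigma^0_1$-induction on the stage number.

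The main obstacle I expect is the non-compactness of $X$: ``covering dimension $\le n$'' as formalized here speaks only of \emph{finite} open covers, whereas the general-position perturbations above want fine covers, which a non-compact Polish space does not admit finitely. I would take whichever of the following two routes formalizes more cleanly. Route (a): first embed $X$ into the Hilbert cube $\interval^{\mathbb{N}}$ by the explicit map $x\mapsto(d(x,a_i))_i$, and carry out the reduction from $\interval^{\mathbb{N}}$ down to $\interval^{2n+1}$ by choosing coordinate functions $f_l=\sum_i c^l_i\,d(\cdot,a_i)$ with generically chosen rational coefficients, organizing the requirement list so that only finitely much of the coefficient data is decided at each stage and the relevant ``general position for an $n$-dimensional set in $\mathbb{R}^{2n+1}$'' is verified by approximation through the finite-dimensional projections. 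Route (b): first prove within ${\sf RCA}_0$ that an $n$-dimensional Polish space has an $n$-dimensional metrizable compactification (via the same nerve machinery), embed the compactification into $N^n\subseteq\interval^{2n+1}$ by the compact case --- where $C(\tilde X,\interval^{2n+1})$ \emph{is} separable and the requirements reduce directly to finite open covers --- and restrict. A secondary nuisance, as everywhere in the paper, is arranging the recursion so that the no-injury bookkeeping and the final verifications use only $\Sigma^0_1$-induction; the device of Lemma \ref{lem:lemma2} --- packaging the construction step as a fixed $\Sigma^0_1$ operator and inducting on stage numbers rather than on the objects produced --- should again suffice.
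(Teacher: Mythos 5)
Your overall blueprint coincides with the paper's: build $f$ as an explicit Cauchy sequence of nerve ($\kappa$-)maps, use general-position perturbations against an enumeration of the rational $n$-planes $L_{J,\bar q}$ to land in $N^n$, and package the perturbation step as a single $\Sigma^0_1$-operator so that all the stagewise verifications reduce to $\Sigma^0_1$-induction (the device of Lemma~\ref{lem:lemma2}). Your (H)-requirements and the way you meet them are essentially the paper's Claim~\ref{claim:line-avoiding}. However, there are two places where the proposal falls short of a proof in ${\sf RCA}_0$, and both are exactly the places the paper has to work hardest.

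First, non-compactness. You correctly flag the problem --- ``covering dimension $\le n$'' only refines \emph{finite} covers, which are never fine on a non-compact $X$ --- but your Routes (a) and (b) are detours the paper does not need and that would each require substantial additional effectivization. The paper's move is simpler: at stage $t$ it covers the \emph{compact target} $\interval^{2n+1}$ by finitely many $\delta_t$-balls, pulls this back along the current approximation $f_t$ to a finite open cover $\mathcal W_t$ of $X$, forms $\mathcal V_t\wedge\mathcal W_t$ with the two-element cover $\mathcal V_t=\{V_j,\,X\setminus\overline{V_i}\}$ (where $\overline{V_i}\subseteq V_j$), takes an order-$\le n$ refinement of that, and star-refines. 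Fineness in $X$ is never demanded of the cover; only fineness of the images in $\interval^{2n+1}$ is, and that comes from $\mathcal W_t$. Neither a Hilbert-cube detour nor a compactification is required.

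Second, and more seriously, your (E)-requirement does not give you what you claim. Its two clauses --- separation of images of disjoint closed balls, and ``small balls of $\interval^{2n+1}$ have small preimages'' --- amount to saying that each $h^{(k)}$ is an $\varepsilon$-mapping, and you then assert this ``produce[s] the partial inverse $g$.'' But the paper explicitly points out (right before Claim~\ref{claim-open-map}) that, in ${\sf RCA}_0$ and without compactness, knowing $f$ is an $\varepsilon$-mapping for every $\varepsilon$ does \emph{not} let you effectively compute a code of $f[A]$ for closed $A$, so it does not give a code for $f^{-1}$. What the paper proves instead is a set-theoretic, cover-shaped statement (Claim~\ref{claim-v-mapping}): for every $y$, $f^{-1}[B(y;\eta_t/4)]$ is contained in a \emph{single member} of the two-element cover $\mathcal V_t$. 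Because $\{V_j,X\setminus\overline{V_i}\}$ is a cover, this feeds directly into the nerve/star-refinement step, and --- crucially --- because it says which of the two members the preimage sits in, it supports Claim~\ref{claim-open-map}: one can $\Sigma^0_1$-enumerate, for each open $U\subseteq X$, a list of balls in $\interval^{2n+1}$ whose union is exactly $f[U]$, which is the effective openness of $f$ and hence a code for $f^{-1}$. Your requirement with two disjoint closed balls $\overline{B(a_i,p)},\overline{B(a_j,q)}$ (which together are \emph{not} a cover) does not slot into the refinement machinery, and the $\varepsilon$-mapping half of it, as the paper warns, is too weak to give the inverse code. To repair the proposal you would want to replace (E$_{i,j,p,q}$) by the paper's pair-based requirement indexed by formal inclusions $\overline{V_i}\subseteq V_j$ and aim for the stronger V-mapping conclusion, after which the open-map argument of Claim~\ref{claim-open-map} becomes available.
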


\subsubsection{The modified Kuratowski mapping}

We say that points $\{p_i\}_{i<\ell}$ in $\mathbb{I}^{d+1}$ are in a {\em general position}, i.e., if $0\leq m\leq d$, then any $m+2$ points from $\{p_i\}_{i<\ell}$ do not lie in an $m$-dimensional hyperplane of $\mathbb{I}^{d+1}$.
The following is an easy effectivization of a very basic observation (cf.\ Engelking \cite[Theorem 1.10.2]{Engelking}).

\begin{Observation}[${\sf RCA}_0$]\label{obs:general-position}
Given $\varepsilon>0$ and points $q_1,\dots,q_k\in \mathbb{R}^m$, one can effectively find $p_1,\dots p_k\in \mathbb{R}^m$ in
general position such that $d(p_i,q_i)<\varepsilon$ for any $i\leq k$.
\qed
\end{Observation}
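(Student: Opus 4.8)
The plan is to bypass the usual inductive placement of points one at a time (as in Engelking \cite[Theorem 1.10.2]{Engelking}) by a single closed-form perturbation along a moment curve; this keeps any induction trivial and makes the effectivity transparent. First I would note that, unwinding the definition above, a tuple $(p_i)_{1\le i\le k}$ in $\mathbb{R}^m$ is in general position exactly when every subtuple of length $r\le m+1$ is affinely independent — equivalently, for every $S=\{i_1<\dots<i_r\}\subseteq\{1,\dots,k\}$ with $r\le m+1$, the $r\times(m+1)$ matrix whose $s$-th row is $(1,p_{i_s})$ has rank $r$, i.e.\ its $r\times r$ submatrix $M_S$ formed by columns $0,1,\dots,r-1$ (the all-ones column and the first $r-1$ coordinate columns) has nonzero determinant. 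Here I use that a subtuple of an affinely independent tuple is affinely independent, so controlling length-$(m+1)$ subtuples suffices.

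Next I would set $p_i(\delta)=q_i+\delta\,v_i$ for a real parameter $\delta>0$, where $v_i=(i,i^2,\dots,i^m)\in\mathbb{Z}^m$ are the points of the moment curve at the integer parameters $1,\dots,k$. The key computation is that, for each $S$ as above, $\det M_S(\delta)$ is a polynomial in $\delta$ of degree exactly $r-1$ whose coefficient of $\delta^{r-1}$ is the Vandermonde determinant $\prod_{1\le s<s'\le r}(i_{s'}-i_s)$. This follows by expanding the determinant along its columns and using multilinearity: each coordinate column of $[\,1\mid p_{i_s}(\delta)\,]$ is affine in $\delta$ with $\delta$-coefficient the vector $(i_s^{\ell})_{s}$, and picking the $\delta$-part in all $r-1$ coordinate columns leaves exactly the Vandermonde matrix in the distinct integers $i_1,\dots,i_r$. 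Consequently $\det M_S$ is a nonzero polynomial; multiplying over all $S$ with $2\le|S|\le m+1$ yields a single polynomial $P(\delta)$ whose degree $N$ and leading coefficient — a product of Vandermonde determinants, hence a fixed nonzero integer independent of the $q_i$ — are computable from $m$ and $k$. Whenever $P(\delta)\neq 0$, every length-$\le(m+1)$ subtuple of $(p_i(\delta))_{1\le i\le k}$ is affinely independent, so the tuple is in general position.

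It then remains to produce, effectively from codes of $\varepsilon$ and of the $q_i$, one good value of $\delta$ in a range short enough that the perturbation stays below $\varepsilon$. Put $\varepsilon'=\varepsilon/(\|v_k\|+1)$; since $\|v_1\|\le\dots\le\|v_k\|$, any $\delta$ with $0<\delta<\varepsilon'$ already gives $d(p_i(\delta),q_i)=\delta\|v_i\|\le\delta\|v_k\|<\varepsilon$ for all $i\le k$. Now consider the $N+1$ rationals $\delta_j=\varepsilon'(j+1)/(N+2)$ for $j\le N$: a polynomial of degree $N$ has at most $N$ roots, so $P(\delta_j)\neq 0$ for at least one $j$, and since each $P(\delta_j)$ is a real whose rational approximations are computable from the data, dovetailing over pairs $(j,n)$ until some stage confirms $|P(\delta_j)|>2^{-n}$ terminates and returns such a $j$. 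Output $p_i=q_i+\delta_j v_i$. This describes a partial continuous functional, total on legitimate inputs, and uses only bounded reasoning together with the elementary bound on the number of roots of a polynomial of a given degree — well within $\Sigma^0_1$-induction. I do not expect a genuine obstacle here: the only care needed is the determinant bookkeeping of the previous paragraph and checking that the dovetailed search is a legitimate ${\sf RCA}_0$ construction, the data-independence of the leading coefficient of $P$ removing any worry about uniformity.
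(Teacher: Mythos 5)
Your proposal is correct, but note that the paper offers no proof of this Observation at all --- it is stated with a bare \qed and a pointer to Engelking \cite[Theorem~1.10.2]{Engelking}, whose argument is exactly the $k$-step induction you set out to avoid: place $p_1,\dots,p_i$ one at a time, at each step nudging $p_i$ off the finitely many affine hulls spanned by $\le m$ of the earlier points. Your moment-curve perturbation $p_i=q_i+\delta\,(i,i^2,\dots,i^m)$ is a genuinely different, global route. The determinant bookkeeping checks out: using the specific $r\times r$ minor on columns $0,\dots,r-1$ gives a \emph{sufficient} (not necessary) criterion for affine independence of the $r$ chosen points, which is all you need; multilinearity in $\delta$ shows the top coefficient is the Vandermonde $\prod_{s<s'}(i_{s'}-i_s)\ne 0$, so each factor of $P$ has exact degree $|S|-1$ with a nonzero integer leading coefficient, hence $P$ has computable degree $N$, a fixed nonzero integer leading coefficient, and therefore at most $N$ real roots (provable in ${\sf RCA}_0$ by applying the Vandermonde nonsingularity argument to $N+1$ alleged roots). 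The dovetailed search for a $\delta_j$ with a confirmed lower bound $|P(\delta_j)|>2^{-n}$ is a legitimate code for a partial continuous functional on codes and is total on valid inputs, so effectivity in the paper's sense is met. What your approach buys over the Engelking-style induction (which also works in ${\sf RCA}_0$, being a bounded induction whose steps each hide a similar ``perturb then search'' subroutine) is that all the effectivity is concentrated into one explicit polynomial with data-independent leading coefficient and a single search, making the uniformity transparent; the cost is the one-time determinant computation.
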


A {\em polyhedron} is a geometric realization $|\mathcal{K}|$ of a simplicial complex $\mathcal{K}$ in a Euclidean space.
We approximate a given space by a polyhedron as follows:
Let $\mathcal{U}=(U_i)_{i<k}$ be a finite open cover of $X$.
The {\em nerve of $\mathcal{U}$} is an abstract simplicial complex $\mathcal{N}(\mathcal{U})$ with $k$ many vertices $\{p_i\}_{i<k}$ such that an $m$-simplex $\{p_{j_0},\dots,p_{j_{m+1}}\}$ belongs to $\mathcal{N}(\mathcal{U})$ iff $U_{j_0}\cap\dots\cap U_{j_{m+1}}$ is nonempty.
We define the function $\kappa:X\to |\mathcal{N}(\mathcal{U})|$ as follows:
\[\kappa(x)=\frac{\sum_{i=0}^{k-1}d(x,X\setminus U_i)p_i}{\sum_{j=0}^{k-1}d(x,X\setminus U_j)}.\]
The function $\kappa$ is called {\em the $\kappa$-mapping (or Kuratowski mapping) determined by $\mathcal{U}$ and $(p_i)_{i<k}$}.
For basics on the $\kappa$-mapping, see also Engelking \cite[Definition 1.10.15]{Engelking}, and Nagata \cite[Section IV.5]{Nagata}.

However, we cannot ensure the existence of the $(x,i)\mapsto d(x,X\setminus U_i)$ within ${\sf RCA}_0$.
Therefore, we introduce a replacement for the $\kappa$-mapping.
Recall that, within ${\sf RCA}_0$, given an open set $U_i$, one can effectively find a continuous function $u_i\colon X\to[0,1]$ whose cozero set is exactly $U_i$.
The {\em modified $\kappa$-mapping} $\kappa\colon X\to \mathbb{I}^{2n+1}$ determined by $(u_i)_{i<s}$ and $(z_i)_{i<s}$ is defined as follows:
\[\kappa(x)=
\frac{\sum_{i<s}u_i(x)z_i}{\sum_{j<s}u_j(x)}.
\]

The denominator of the above formula is nonzero whenever $\mathcal{U}$ is a cover of $X$.
Given $x\in X$ let $\Lambda(x)$ be the list of all indices $e<s$ such that $x\in U_e$.
Such sets exist by bounded $\Sigma^0_1$ comprehension within ${\sf RCA}_0$.
Let $Z(x)$ be the hyperplane spanned by $(z_e:e\in\Lambda(x))$.

\begin{Claim}[${\sf RCA}_0$]\label{claim:kappa-in-L}
For any $x\in X$, $\kappa(x)$ is contained in the convex hull of $(z_e:e\in\Lambda(x))$, and in particular, $\kappa(x)\in Z(x)$.
\end{Claim}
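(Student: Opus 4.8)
The plan is to unwind the definition of the modified $\kappa$-mapping and observe that only the indices in $\Lambda(x)$ contribute. First I would recall that each $u_i\colon X\to[0,1]$ was chosen so that its cozero set is exactly $U_i$; hence for $x\in X$ and $i<s$ we have $u_i(x)>0$ if and only if $i\in\Lambda(x)$, and in particular $u_i(x)=0$ for every $i\notin\Lambda(x)$. Substituting this into the defining formula,
\[
\kappa(x)=\frac{\sum_{i<s}u_i(x)z_i}{\sum_{j<s}u_j(x)}=\frac{\sum_{e\in\Lambda(x)}u_e(x)z_e}{\sum_{e\in\Lambda(x)}u_e(x)},
\]
where the re-indexing of these finite sums over the set $\Lambda(x)$ — which exists by bounded $\Sigma^0_1$ comprehension, as already noted — is carried out inside ${\sf RCA}_0$.

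Next I would set $\lambda_e=u_e(x)/\sum_{e'\in\Lambda(x)}u_{e'}(x)$ for $e\in\Lambda(x)$. Since $\mathcal{U}$ is a cover of $X$, the set $\Lambda(x)$ is nonempty and the denominator $\sum_{e'\in\Lambda(x)}u_{e'}(x)$ is a strictly positive real (this is exactly the remark that the denominator of the $\kappa$-formula is nonzero); thus each $\lambda_e$ is a well-defined nonnegative real, $\sum_{e\in\Lambda(x)}\lambda_e=1$, and $\kappa(x)=\sum_{e\in\Lambda(x)}\lambda_e z_e$. This exhibits $\kappa(x)$ as a convex combination of the points $(z_e:e\in\Lambda(x))$, hence as a point of their convex hull, which is the first assertion of the claim.

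Finally, the ``in particular'' clause is immediate: the convex hull of $(z_e:e\in\Lambda(x))$ is contained in their affine span, and $Z(x)$ was defined to be the hyperplane spanned by exactly these points, so $\kappa(x)\in Z(x)$.

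I do not expect a genuine obstacle here: the statement is a direct computation from the definitions, and the argument is manifestly within ${\sf RCA}_0$. The only points requiring a modicum of care are bookkeeping ones — that $\Lambda(x)$ is available as a finite set, that the denominator is positive (which rests on $\mathcal{U}$ being a cover), and that the relevant finite sums may be re-indexed over $\Lambda(x)$ — and each of these has already been addressed in the surrounding text.
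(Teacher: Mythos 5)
Your proof is correct and follows essentially the same route as the paper: drop the indices outside $\Lambda(x)$ using $u_i(x)=0$, normalize to get coefficients $\lambda_e$ summing to $1$, and read off $\kappa(x)$ as a convex combination of $(z_e:e\in\Lambda(x))$. You spell out a couple of bookkeeping points (re-indexing the finite sums, positivity of the denominator) slightly more explicitly than the paper, but the argument is the same.
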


\begin{proof}
Fix $x\in X$.
By definition of $u_i$, $x\not\in U_i$ (i.e., $i\in\Lambda(x)$) implies $u_i(x)=0$.
Put $\lambda_i=u_i(x)/(\sum_{j\in\Lambda(x)}u_j(x))$.
Clearly, $\sum_{i\in\Lambda(x)}\lambda_i=1$, and $\kappa(x)=\sum_{i\in\Lambda(x)}\lambda_iz_i$.
Hence, $\kappa(x)$ is contained in the convex hull of $(z_e:e\in\Lambda(x))$.
\end{proof}

\subsubsection{Proof of Theorem \ref{thm:Nobeling}}

First note that, to work within ${\sf RCA}_0$, we need to avoid any use of compactness.
Therefore, we cannot use the standard proof of N\"obeling's imbedding theorem.
However, we will see that one can remove compactness arguments from some proof of N\"obeling's imbedding theorem, e.g., given in \cite[Theorem IV.8]{Nagata}, by performing a very careful work.

\begin{proof}[Proof of Theorem \ref{thm:Nobeling}]
For $n+1$ coordinates $(c_i)_{i<n+1}\in (2n+1)^{n+1}$ and $n+1$ rationals $(r_i)_{i<n+1}$, consider the following hyperplane:
\[L=\{(x_j)_{j<2n+1}\in\mathbb{I}^{2n+1}:(\forall i<n+1)\;x_{c_i}=r_i\}.\]

Let $(L_t)_{t\in\om}$ be the list of all such hyperplanes.
For a list $(V_e)_{e\in\om}$ of all basic open balls in $X$, let $\langle i,j\rangle$ be the $t$-th pair such that $\overline{V_i}\subseteq V_j$.
Then, consider the open cover $\mathcal{V}_{t}=\{V_j,X\setminus\overline{V_i}\}$, where $\overline{V_i}$ is the formal closure of $V_i$; that is, the closed ball whose center and radius are the same as $V_i$.

We first gives an explicit construction of (a code of) a sequence $(f_t)_{t\in\mathbb{N}}$ of (possibly partial) continuous functions.
We describe our construction at stage $t$.
Suppose that a continuous function $f_t\colon X\to{\interval}^{2n+1}$ and a positive rational $\delta_t>0$ have already been constructed.
Consider $L_t$ and $\mathcal{V}_t$.
We construct a $\mathcal{V}_t$-mapping $f_{t+1}$ which avoids $L_t$.

By total boundedness of $\mathbb{I}^{2n+1}$, one can easily find a collection $(x_j)_{j\leq m}$ of points in $\mathbb{I}^{2n+1}$ such that $(B(x_j;\delta_t))_{j\leq m}$ covers $\mathbb{I}^{2n+1}$, where $B(x;\delta)$ is the open ball centered at $x$ of radius $\delta$.
Consider $\mathcal{W}_t=\{f_t^{-1}[B(x_j;\delta_t)]:j\leq m\}$.
Since the covering dimension of $X$ is at most $n$, one can effectively find an open refinement of $\mathcal{V}_t\land\mathcal{W}_t$ of order at most $n$.
Apply Lemma \ref{lem:star-refinement} to this new open cover of $X$ to get an open star refinement $\mathcal{U}_t=(U^t_i)_{i<s}$ of $\mathcal{V}_t\land\mathcal{W}_t$ of order at most $n$.
Then, one can effectively find a sequence of continuous functions $(u^t_i)_{i<s}$ such that $U^t_i$ is the cozero set of $u^t_i$.

For each $i<s$, one can effectively choose $x_i\in U^t_i$, and then get the value $f_t(x_i)$.
Then, by Observation \ref{obs:general-position}, we can effectively choose $z^t_i\in X$ and $p^t_j\in L_t$ such that
\begin{align*}
\mbox{$d(f_t(x_i),z^t_i)<\delta$, and $(z^t_i,p^t_j)_{i<s,j<n+1}$ are in a general position,}
\end{align*}
i.e., if $0\leq m\leq 2n$, then any $m+2$ vertices do not lie in an $m$-dimensional hyperplane of $\mathbb{I}^{2n+1}$.
Let $\kappa\colon X\to\mathbb{I}^{2n+1}$ be the modified $\kappa$-mapping determined by $(u_i)_{i<s}$ and $(z_i)_{i<s}$.

\begin{Claim}[${\sf RCA}_0$]\label{claim:uniform-limit}
$d(f_{t}(x),\kappa(x))<3\delta_t$ for any $x\in X$.
\end{Claim}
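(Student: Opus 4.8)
The plan is to read off the value $\kappa(x)$ from the convexity structure already isolated in Claim \ref{claim:kappa-in-L}. Fix $x\in X$. By that claim, $\kappa(x)=\sum_{e\in\Lambda(x)}\lambda_e z_e^t$ for some weights $\lambda_e\geq 0$ with $\sum_{e\in\Lambda(x)}\lambda_e=1$, where $\Lambda(x)$ is the (finite, bounded-$\Sigma^0_1$-definable) set of indices $e<s$ with $x\in U_e^t$. Since $\mathbb{I}^{2n+1}\subseteq\mathbb{R}^{2n+1}$ and balls in a Euclidean space are convex, it suffices to show that every one of these finitely many vertices $z_e^t$ (for $e\in\Lambda(x)$) lies within distance $3\delta_t$ of $f_t(x)$: then $\|f_t(x)-\kappa(x)\|=\|\sum_{e\in\Lambda(x)}\lambda_e(f_t(x)-z_e^t)\|\leq\sum_{e\in\Lambda(x)}\lambda_e\|f_t(x)-z_e^t\|<3\delta_t$, using $\sum_e\lambda_e=1$.

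So the core step is to bound $d(f_t(x),z_e^t)$ for $e\in\Lambda(x)$. Here I would use only that $\mathcal{U}_t$ is a \emph{refinement} of $\mathcal{V}_t\wedge\mathcal{W}_t$ (the star-refinement property is not needed for this claim): each $U_e^t$ is contained in some member of $\mathcal{W}_t=\{f_t^{-1}[B(x_j;\delta_t)]:j\leq m\}$, say $U_e^t\subseteq f_t^{-1}[B(x_j;\delta_t)]$. Both $x$ and the chosen sample point $x_e\in U_e^t$ then have $f_t$-images in $B(x_j;\delta_t)$, so $d(f_t(x),f_t(x_e))<2\delta_t$. Combining this with the choice $d(f_t(x_e),z_e^t)<\delta\leq\delta_t$ secured via Observation \ref{obs:general-position}, the triangle inequality yields $d(f_t(x),z_e^t)<3\delta_t$, as wanted.

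Assembling these two observations gives the claim. I expect no genuine obstacle: everything reduces to the triangle inequality and the convexity of a ball in $\mathbb{R}^{2n+1}$, and the set $\Lambda(x)$ and the finite sum defining $\kappa(x)$ are available in ${\sf RCA}_0$, so no induction or compactness enters at this step. The only points of care are to invoke the mere refinement (not the star refinement) of $\mathcal{U}_t$ over $\mathcal{W}_t$, and to keep the constant honest: $2\delta_t$ because $x$ and $x_e$ lie in a common $f_t$-preimage of a $\delta_t$-ball, plus a further $\delta_t$ for the general-position perturbation $z_e^t$ of $f_t(x_e)$, for a total of $3\delta_t$.
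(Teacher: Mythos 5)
Your proof is correct and matches the paper's argument essentially step for step: both reduce the claim to bounding $d(f_t(x),z_e^t)$ for $e\in\Lambda(x)$ via the refinement of $\mathcal{W}_t$ and the triangle inequality, then conclude by convexity of the combination defining $\kappa(x)$. Your explicit remark that only the refinement (not the star refinement) property is needed here is a small but accurate clarification that the paper leaves implicit.
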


\begin{proof}
Let $x\in X$ be given.
If $x\not\in U^t_i$, then $u^t_i(x)=0$.
If $x\in U^t_i$, since $\mathcal{U}_t$ is a refinement of $\mathcal{W}_t$, we have $d(f_t(x),f_t(y))<2\delta_t$ for any $y\in U^t_i$.
Therefore, $d(f_t(x),z^t_i)<3\delta_t$ since $d(f_t(x_i),z^t_i)<\delta_t$, where $x_i\in U^t_i$.
Hence, by the definition of the modified $\kappa$-mapping, we get $d(f_t(x),\kappa(x))<3\delta_t$ for any $x\in X$, since
\[d(f_t(x),\kappa(x))=d\left(\sum_{i<s}\lambda_i(x)f_t(x),\sum_{i<s}\lambda_i(x)z^t_i\right)\leq\sum_{i<s}\lambda_i(x)d(f_t(x),z^t_i)<3\delta_t\]
where $\lambda_i(x)$ is defined as in Claim \ref{claim:kappa-in-L}.
The first equality follows from $\sum_{i<s}\lambda_i=1$, and the middle inequality follows from the triangle inequality.
\end{proof}

Let $[s]^{\leq n}$ denote the set of all finite subsets $D\subseteq s$ with $|D|\leq n$, and $Z^t_D$ be the hyperplane spanned by $(z^t_e:e\in D)$.
Now, one can calculate the following value:
\begin{align*}
\eta_t:=\min\{d(Z^t_D,Z^t_E):D,E\in[s]^{\leq n},\;Z^t_D\cap Z^t_E=\emptyset)\}>0.
\end{align*}


Recall that $(z^t_i)_{i\in\Lambda(x)}$ and $(p^t_j)_{j<n+1}$ are in a general position, and $L_t$ is spanned by $(p^t_j)_{j<n+1}$, which implies that $d(Z^t_D,L_t)>0$ for any $D\in[s]^{\leq n}$.
One can also calculate the following value:
\begin{align*}
\eta'_t:=\min\{d(Z^t_D,L_t):D\in[s]^{\leq n}\}>0.
\end{align*}

Now, define $f_{t+1}=\kappa$ (where $\kappa$ is the modified $\kappa$-mapping defined before Claim \ref{claim:uniform-limit}) and $\delta_{t+1}=\min\{\delta_t,\eta_t/8,\eta'_t/4\}/3$.
To reduce the complexity of induction, we now note that the construction $(f_t,\delta_t)\mapsto(f_{t+1},\delta_{t+1},\eta_t,\eta'_t)$ is effective, i.e., has an explicit $\Sigma^0_1$-description.
We then have a sequence $(f_t,\delta_t,\eta_t,\eta'_t)_{t\in\mathbb{N}}$ with auxiliary parameters $(z^t_i)_{t\in\mathbb{N},i<s}$ and $(p^t_j)_{t\in\mathbb{N},j<n+1}$.
A simple induction shows $\delta_t<2^{-t}$.
By $\Sigma^0_1$-induction with Claim \ref{claim:uniform-limit}, for any $t\leq s$, one can also show that $d(f_t(x),f_s(x))<\sum_{s\geq t}\delta_{s+1}<2^{-t}$; hence this is classically a uniform convergent sequence.
Note that the uniform limit theorem is provable within ${\sf RCA}_0$ since a modulus of pointwise continuity of the uniform limit $f=\lim_{t\to\infty}f_t$ is effectively calculated from a sequence of moduli of pointwise continuity of $(f_t)_{t\in\mathbb{N}}$ and the modulus of uniform convergence $2^{-t}$.
Hence, the uniform limit $f=\lim_{t\to\infty}f_t$ exists.
By definition of $\delta_t$, we also get $d(f,f_{t+1})<\eta_t/4,\eta'_t/2$.

\begin{Claim}[${\sf RCA}_0$]\label{claim-v-mapping}
For any $t\in\mathbb{N}$ and $y\in\interval^{2n+1}$ there is $V\in\mathcal{V}_t$ such that $f^{-1}[B(y;\eta_t/4)]\subseteq V$.
\end{Claim}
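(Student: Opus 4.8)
The plan is to show that the entire preimage $S:=f^{-1}[B(y;\eta_t/4)]$ lies inside a single star $\mathrm{st}(U^t_{e_0},\mathcal{U}_t)$ of the cover $\mathcal{U}_t=(U^t_i)_{i<s}$ constructed at stage $t$, and then to use that $\mathcal{U}_t$ is an open star refinement of $\mathcal{V}_t\land\mathcal{W}_t$ (in particular of $\mathcal{V}_t$), so that $\mathrm{st}(U^t_{e_0},\mathcal{U}_t)$ is contained in some member of $\mathcal{V}_t\land\mathcal{W}_t$, hence in some $V\in\mathcal{V}_t$. If no $x\in X$ satisfies $d(f(x),y)<\eta_t/4$ then $S=\emptyset$ and any $V\in\mathcal{V}_t$ works; otherwise I would fix some $x_0\in S$ and some $e_0\in\Lambda(x_0)$ (the latter is nonempty because $\mathcal{U}_t$ covers $X$), where throughout $\Lambda(\cdot)$ refers to the cover $\mathcal{U}_t$.

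The first main step is to convert the metric condition defining $S$ into the combinatorial statement that $\Lambda(x)\cap\Lambda(x_0)\neq\emptyset$ for every $x\in S$. Since $f_{t+1}=\kappa$ is the modified $\kappa$-mapping determined by $(u^t_i)_{i<s}$ and $(z^t_i)_{i<s}$, and $d(f,f_{t+1})<\eta_t/4$ has already been established, every $x\in S$ satisfies $d(\kappa(x),y)\le d(\kappa(x),f(x))+d(f(x),y)<\eta_t/2$, hence $d(\kappa(x),\kappa(x_0))<\eta_t$. By Claim \ref{claim:kappa-in-L}, $\kappa(x)\in Z^t_{\Lambda(x)}$ and $\kappa(x_0)\in Z^t_{\Lambda(x_0)}$, and since $\mathcal{U}_t$ has order at most $n$ both index sets have at most $n+1$ elements, so $Z^t_{\Lambda(x)}$ and $Z^t_{\Lambda(x_0)}$ are among the hyperplanes occurring in the definition of $\eta_t$. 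Were they disjoint we would get $d(\kappa(x),\kappa(x_0))\ge\eta_t$, a contradiction; so $Z^t_{\Lambda(x)}\cap Z^t_{\Lambda(x_0)}\neq\emptyset$.

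The second step is the general-position argument that promotes this to $\Lambda(x)\cap\Lambda(x_0)\neq\emptyset$: the vertices $(z^t_i)_{i<s}$ together with $(p^t_j)_{j<n+1}$ were chosen in general position in $\mathbb{I}^{2n+1}$, and if $\Lambda(x)\cap\Lambda(x_0)=\emptyset$ then $Z^t_{\Lambda(x)}$ and $Z^t_{\Lambda(x_0)}$ are the affine hulls of $|\Lambda(x)|+|\Lambda(x_0)|\le 2n+2$ distinct, affinely independent points of $\mathbb{I}^{2n+1}$, which forces these two affine subspaces to be disjoint (a dimension count, using $2n+2=\dim\mathbb{I}^{2n+1}+1$), contradicting the previous paragraph. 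So there is an index $e\in\Lambda(x)\cap\Lambda(x_0)$, giving $x\in U^t_e$ and $x_0\in U^t_e\cap U^t_{e_0}$; hence $U^t_e\cap U^t_{e_0}\neq\emptyset$ and $x\in\mathrm{st}(U^t_{e_0},\mathcal{U}_t)$. Thus $S\subseteq\mathrm{st}(U^t_{e_0},\mathcal{U}_t)$, and the star-refinement property of $\mathcal{U}_t$ over $\mathcal{V}_t\land\mathcal{W}_t$ finishes the argument. I expect every step to have bounded arithmetical complexity and to use only facts already proved (plus bounded $\Sigma^0_1$-comprehension for the sets $\Lambda(x)$), so no induction beyond ${\sf RCA}_0$ is required.

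The part I expect to need the most care is the second step: one must make sure both hyperplanes have at most $n+1$ spanning vertices — so that together they use at most $2n+2$ vertices, matching $\dim\mathbb{I}^{2n+1}+1$ — before general position can guarantee that disjoint index sets give disjoint affine hulls. This is exactly where the hypothesis $\dim X\le n$ (via the order of $\mathcal{U}_t$) is used, and it explains the role of the ambient dimension $2n+1$. A secondary point is uniformity: $\eta_t$ and the resulting $V$ must depend only on $t$ and $y$, which is why one pins $e_0$ down from a single point $x_0$ and transports the conclusion to all of $S$ from there.
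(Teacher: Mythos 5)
Your proof is correct and follows essentially the same route as the paper's: use $d(f,f_{t+1})<\eta_t/4$ and the definition of $\eta_t$ to force $Z^t_{\Lambda(x)}\cap Z^t_{\Lambda(x_0)}\neq\emptyset$, then the general-position choice of the $z^t_i$'s together with order $\leq n$ (so at most $2n+2$ vertices in $\mathbb{I}^{2n+1}$) yields a common index, and the star-refinement property of $\mathcal{U}_t$ over $\mathcal{V}_t\land\mathcal{W}_t$ finishes. The only cosmetic difference is that you place the preimage inside ${\rm st}(U^t_{e_0},\mathcal{U}_t)$ while the paper uses the point-star ${\rm st}(\{x\},\mathcal{U}_t)$; both are covered by the refinement property established in Lemma \ref{lem:star-refinement}.
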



\begin{proof}
Let $y\in\interval^{2n+1}$ be given.
For $x,x'\in f^{-1}[B(y;\eta_t/4)]$, we have $d(f(x),f(x'))<\eta_t/2$.
As $d(f,f_{t+1})<\eta_t/4$, we have $d(f_{t+1}(x),f_{t+1}(x'))<\eta_t$.
By Claim \ref{claim:kappa-in-L}, we have $f_{t+1}(x)=\kappa(x)\in Z^t(x)$ and $f_{t+1}(x')=\kappa(x')\in Z^t(x')$, where $Z^t(x)$ is defined in a similar manner as before.
By our choice of $\eta_t$, we have $Z^t(x)\cap Z^t(x')\not=\emptyset$.

Assume that $Z^t(x)$ is spanned by $(z^t_{i_\ell})_{\ell<t}$ and $Z^t(x')$ is spanned by $(z^t_{j_\ell})_{\ell<u}$.
Since $Z^t(x)\cap Z^t(x')\not=\emptyset$, $(z^t_{i_{\ell}},z^t_{j_{m}})_{\ell<t,m<u}$ lie on a $((t-1)+(u-1))$-dimensional hyperplane.
By our choice, the open cover $\mathcal{U}_t$ has the order at most $n$, and therefore $t,u\leq n+1$; hence $t+u\leq 2n+2$.
Since $\{z_{i_{\ell}},z_{j_{m}}\}_{\ell<t,m<u}$ are in a general position, $t+u$ vertices do not lie in an $(t+u-2)$-dimensional hyperplane.
Hence, we must have $\ell$ and $m$ such that $z_{i_\ell}=z_{j_m}$.
This implies that $x,x'\in U_{i_\ell}$.

Consequently, if $x,x'\in f_t^{-1}[B(y;\eta_t/4)]$ then $x'$ belongs to the star of $\{x\}$ w.r.t.\ $\mathcal{U}_t$, that is, $x'\in{\rm st}(\{x\},\mathcal{U}_t)$.
As $\mathcal{U}_t$ is a star refinement of $\mathcal{V}_t$, we obtain $V\in\mathcal{V}_t$ such that $f^{-1}[B(y;\eta_t/4)]\subseteq{\rm st}(\{x\},\mathcal{U}_t)\subseteq V$.
\end{proof}



\begin{Claim}\label{claim:line-avoiding}
$d(f(x),p)>\eta_t'/2$ for any $x\in X$ and $p\in L_t$.
\end{Claim}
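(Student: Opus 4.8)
The plan is to derive the claim from the triangle inequality, once the point $f_{t+1}(x)=\kappa(x)$ has been located geometrically by the earlier claims. First I would fix $x\in X$ and $p\in L_t$, and apply Claim~\ref{claim:kappa-in-L} to the modified $\kappa$-mapping $\kappa$ determined by $(u^t_i)_{i<s}$ and $(z^t_i)_{i<s}$ (which is precisely the map we set equal to $f_{t+1}$): this places $f_{t+1}(x)=\kappa(x)$ inside the hyperplane $Z^t(x)$ spanned by the vertices $(z^t_e:e\in\Lambda(x))$. Since the cover $\mathcal{U}_t$ has order at most $n$, the index set $\Lambda(x)$ is small enough that $Z^t(x)$ is one of the finitely many hyperplanes $Z^t_D$ over which $\eta'_t$ is taken to be the minimum.

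Next I would read off the separation estimate that is already in hand. The vertices $(z^t_i)_i$ and the points $(p^t_j)_{j<n+1}$ spanning $L_t$ were chosen in general position (via Observation~\ref{obs:general-position}), which forces $Z^t_D\cap L_t=\emptyset$ and hence $d\big(Z^t_D,L_t\big)\geq\eta'_t$ for every such $D$; in particular $d(f_{t+1}(x),p)\geq d\big(Z^t(x),L_t\big)\geq\eta'_t$. Combining this with the uniform bound $d(f,f_{t+1})<\eta'_t/2$ established above (which follows from the choice $\delta_{t+1}=\min\{\delta_t,\eta_t/8,\eta'_t/4\}/3$), the triangle inequality gives
\[
d(f(x),p)\;\geq\;d(f_{t+1}(x),p)-d(f_{t+1}(x),f(x))\;>\;\eta'_t-\frac{\eta'_t}{2}\;=\;\frac{\eta'_t}{2},
\]
which is exactly the assertion of the claim.

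I do not expect a genuine obstacle here. Everything needed (the maps $f$ and $f_{t+1}$, the reals $\delta_t,\eta_t,\eta'_t$, and the auxiliary vertices $z^t_i,p^t_j$) has already been produced, with the relevant estimates verified, where necessary by $\Sigma^0_1$-induction, earlier in the proof of Theorem~\ref{thm:Nobeling}, and the present assertion is \emph{pointwise} in $x$, so it carries essentially no further inductive content. The one point demanding a little care is the bookkeeping in the first step: confirming that the particular hyperplane $Z^t(x)$ carrying $f_{t+1}(x)$ genuinely occurs among the $Z^t_D$ quantified in the definition of $\eta'_t$, which amounts to combining the order bound on $\mathcal{U}_t$ with the general-position hypothesis on the chosen vertices.
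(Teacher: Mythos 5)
Your argument is essentially the paper's own proof: place $f_{t+1}(x)=\kappa(x)$ in $Z^t(x)$ via Claim~\ref{claim:kappa-in-L}, use the general-position choice of the $z^t_i$ and $p^t_j$ (built into the definition of $\eta'_t$) to get $d(f_{t+1}(x),L_t)\geq\eta'_t$, and finish with $d(f,f_{t+1})<\eta'_t/2$ and the triangle inequality. The extra bookkeeping you flag (that $Z^t(x)$ occurs among the $Z^t_D$ entering the minimum defining $\eta'_t$, via the order bound on $\mathcal{U}_t$) is exactly the point the paper relies on implicitly, so the proposal is correct and matches the paper's route.
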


\begin{proof}
By definition of $\eta_t'$, we have $d(Z_t(x),L_t)\geq \eta_t'$ for any $x\in X$. 
By Claim \ref{claim:kappa-in-L}, we also have $f_{t+1}(x)\in Z_t(x)$, and therefore $d(f_{t+1}(x),L_t)\geq \eta_t'$.
Hence, $d(f(x),L_t)\geq\eta_t'/2$.
\end{proof}

Claim \ref{claim:line-avoiding} ensures that the range of $f$ avoids $L_t$; hence $f$ is a continuous map from $X$ into the $n$-dimensional N\"obeling space $N^n\subseteq\mathbb{R}^{2n+1}$.

\begin{Claim}
$f$ is injective.
\end{Claim}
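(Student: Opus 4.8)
The plan is to deduce injectivity of the uniform limit $f$ from Claim~\ref{claim-v-mapping} together with the elementary fact that the covers $\mathcal{V}_t$ were chosen precisely so as to separate any two distinct points of $X$. The point is that Claim~\ref{claim-v-mapping} says that for every $t$ the $f$-preimage of a sufficiently small ball around any point of $\interval^{2n+1}$ lies inside a single member of $\mathcal{V}_t$; so if two distinct points of $X$ had the same $f$-image, they would lie together in some member of \emph{every} $\mathcal{V}_t$, and it then suffices to exhibit one single stage $t$ at which no member of $\mathcal{V}_t$ contains both of them.

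First I would fix distinct $x,x'\in X$ and put $\rho=d(x,x')>0$. Working inside ${\sf RCA}_0$ with the given Polish-space code for $X$, one can effectively find a point $a$ of the underlying countable dense set with $d(a,x)<\rho/2$, and then positive rationals $q<q'$ with $d(a,x)<q<q'<\rho-d(a,x)$. The basic open balls $V_i=B(a;q)$ and $V_j=B(a;q')$ occur in the fixed enumeration $(V_e)_{e\in\om}$ and satisfy: $x\in V_i$; the formal closure $\overline{V_i}$ (the closed ball of center $a$, radius $q$) is contained in $V_j$, since $q<q'$; and $x'\notin V_j$, since $d(a,x')\geq \rho-d(a,x)>q'$ by the triangle inequality. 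The pair $\langle i,j\rangle$ with $\overline{V_i}\subseteq V_j$ appears at some stage $t$ of the enumeration, and at that stage $\mathcal{V}_t=\{V_j,\,X\setminus\overline{V_i}\}$. No member of $\mathcal{V}_t$ contains both $x$ and $x'$: indeed $x\in V_i\subseteq\overline{V_i}$, so $x\notin X\setminus\overline{V_i}$, while $x'\notin V_j$.

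Then I would argue by contradiction: suppose $f(x)=f(x')=:y\in\interval^{2n+1}$. Since $y\in B(y;\eta_t/4)$, both $x$ and $x'$ lie in $f^{-1}[B(y;\eta_t/4)]$, so by Claim~\ref{claim-v-mapping} there is $V\in\mathcal{V}_t$ with $x,x'\in f^{-1}[B(y;\eta_t/4)]\subseteq V$, contradicting the previous paragraph. Hence $f(x)\neq f(x')$, and $f$ is injective. I do not expect a genuine obstacle here: the substantive content has already been absorbed into Claim~\ref{claim-v-mapping} (and the preceding claims), and the only thing requiring care is the routine verification that, from the code of $X$, the separating basic balls $V_i\subseteq\overline{V_i}\subseteq V_j$ with $x\in V_i$ and $x'\notin V_j$ can be produced within ${\sf RCA}_0$ — which is immediate from $d(x,x')>0$ and density of the given countable set. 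Incidentally, the very same data is what one uses afterwards to read off a modulus of continuity for $f^{-1}$ on $f[X]$, thereby upgrading $f$ to a topological embedding.
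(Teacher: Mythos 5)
Your proposal is correct and follows essentially the same route as the paper: both arguments hinge on Claim~\ref{claim-v-mapping} applied to a cover $\mathcal{V}_t=\{V_j,\,X\setminus\overline{V_i}\}$ chosen so that $x\in V_i\subseteq\overline{V_i}\subseteq V_j$ while $x'\notin V_j$, and then observe that no single member of $\mathcal{V}_t$ can contain both $x$ and $x'$. The only cosmetic differences are that the paper phrases the conclusion directly (if $x'\notin W\supseteq V_j$ then $f(x')\notin B(f(x);\eta_t/4)$) rather than by contradiction, and it cites perfect normality (Fact~\ref{fact:perfectly-normal}) to produce the balls $V_i\subseteq\overline{V_i}\subseteq V_j$, whereas you construct them by hand from the dense set and rational radii — both are fine in ${\sf RCA}_0$.
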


\begin{proof}
Let $W$ be any open neighborhood of $x\in X$.
Then, by perfect normality of $X$ (Fact \ref{fact:perfectly-normal}), there are basic open balls $V_i$ and $V_j$ such that $x\in V_i\subseteq\overline{V_i}\subseteq V_j\subseteq W$.
By applying Claim \ref{claim-v-mapping} to the code $t$ of pairs $\langle i,j\rangle$ (i.e., $\mathcal{V}_t=\{V_j,X\setminus\overline{V_i}\}$), we get an open neighborhood $B$ of $f(x)$ such that either $f^{-1}[B]\subseteq V_j$ or $f^{-1}[B]\subseteq X\setminus\overline{V_i}$.
However, as $x\in V_i$, we have $x\in f^{-1}[B]\cap V_i\not=\emptyset$; hence $f^{-1}[B]\subseteq V_j$.
Therefore, if $x'\not\in W$ then, as $W\supseteq V_j$, we get $f(x')\not\in B$.
This implies that $f$ is injective.
\end{proof}

It remains to show that $f^{-1}$ is continuous in ${\sf RCA}_0$.
In the usual proof, by using the property that $f$ is an $\varepsilon$-mapping for all $\varepsilon>0$, we conclude that $f$ is a closed map.
However, it is unclear that, from the property being an $\varepsilon$-mapping, how one can effectively obtain a code of the closed image $f[A]$ of a closed set $A\subseteq X$ (without using any compactness arguments).
Fortunately, Claim \ref{claim-v-mapping} has more information than just saying that $f$ is an $\varepsilon$-mapping, which can be used to show that $f$ is an effective open map.

\begin{Claim}\label{claim-open-map}
$f$ is an open map.
\end{Claim}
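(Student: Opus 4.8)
The plan is to read off the openness of $f$ (onto its image) from Claim \ref{claim-v-mapping}, which already isolates the geometric heart of the matter, and then to turn the resulting argument into an explicit $\Sigma^0_1$ construction of a code for $f^{-1}$. First I would fix an open set $U\subseteq X$ and a point $x\in U$; using perfect normality (Fact \ref{fact:perfectly-normal}) together with the density of the basic balls $(V_e)_{e\in\om}$, one can effectively find basic open balls $V_i,V_j$ with $x\in V_i\subseteq\overline{V_i}\subseteq V_j\subseteq U$ (with $\overline{V_i}$ the formal closure). Let $t$ be the code of the pair $\langle i,j\rangle$, so that $\mathcal{V}_t=\{V_j,\,X\setminus\overline{V_i}\}$. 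Applying Claim \ref{claim-v-mapping} to the point $y=f(x)$ produces some $V\in\mathcal{V}_t$ with $f^{-1}[B(f(x);\eta_t/4)]\subseteq V$. Since $\eta_t>0$ this preimage contains $x$, and $x\in V_i$ rules out $V=X\setminus\overline{V_i}$; hence $V=V_j$ and $f^{-1}[B(f(x);\eta_t/4)]\subseteq V_j\subseteq U$.

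That finishes the geometric content: if $z=f(x')$ lies in $B(f(x);\eta_t/4)$ then $x'\in f^{-1}[B(f(x);\eta_t/4)]\subseteq U$, so $z\in f[U]$; thus $B(f(x);\eta_t/4)\cap f[X]\subseteq f[U]$, and every point of $f[U]$ is relatively interior, i.e.\ $f[U]$ is open in the subspace $f[X]$. Together with injectivity of $f$ this is exactly the statement that $f$ is a topological embedding, so it remains only to package it effectively.

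For the ${\sf RCA}_0$ version I would avoid presenting $f[X]$ as a Polish space and instead directly build a code of the (partial) continuous inverse $g=f^{-1}$, using nothing beyond the already-constructed parameters $\eta_t$ and the rational centres and radii of the basic balls, which keeps the construction $\Sigma^0_1$: for every pair of basic balls $V_i,V_j$ with $\overline{V_i}\subseteq V_j$, writing $V_j=B(a_j;\rho_j)$ and $t=\langle i,j\rangle$, and for every point $q$ of the countable dense subset of $X$ with $q\in V_i$ and every rational $b\in\interval^{2n+1}$ with $d(f(q),b)<\eta_t/16$, enumerate the datum $(b,\eta_t/8,a_j,\rho_j)$ into the code of $g$. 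Its correctness follows from $f^{-1}[B(b;\eta_t/8)]\subseteq f^{-1}[B(f(q);\eta_t/4)]\subseteq V_j=B(a_j;\rho_j)$, and since for each $x\in X$ one can take $q\in V_i$ arbitrarily close to $x$ (hence $f(q)$ arbitrarily close to $f(x)$) while shrinking $\rho_j$ below any prescribed precision, $g$ is total on $f[X]$ and satisfies $g\circ f=\mathrm{id}_X$. I do not expect a genuine obstacle here, since the real work was done in Claim \ref{claim-v-mapping} (the general-position argument); the one point that needs care is the bookkeeping above — checking that each enumerated datum is valid, that the enumerated balls cover all of $f[X]$, and that the whole recipe is expressible by a $\Sigma^0_1$ formula so that a code of $g$ provably exists in ${\sf RCA}_0$.
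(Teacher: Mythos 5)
Your argument is correct, and it rests on exactly the same engine as the paper's proof --- Claim \ref{claim-v-mapping} applied to the two-element covers $\mathcal{V}_t=\{V_j,\,X\setminus\overline{V_i}\}$, with a point of $\overline{V_i}$ in the preimage forcing the $V_j$ branch of the dichotomy --- but the effective packaging is genuinely different. The paper fixes a coded open set $U=\bigcup_e V_{u(e)}$ and enumerates a positive code of $f[U]$ (relative to the image): it lists small balls $B^{e,j}_k$ of radius $\leq\eta_{t(e,j)}/4$ and accepts $B^{e,j}_k$ when the coded preimage $f^{-1}[B^{e,j}_k]=\bigcup_m V_{s(e,j,k,m)}$ is \emph{seen} to contain a ball formally included in the inner ball $V_{v(e,j)}$; this syntactic search on the preimage's code is the paper's $\Sigma^0_1$ certificate that the $V_{u(e)}$ branch holds, and the code of $f^{-1}$ is then read off from the codes of the images $f[U]$. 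You instead build the code of $g=f^{-1}$ directly, anchoring each enumerated datum at $f(q)$ for a dense point $q\in V_i$: since $q$ itself lies in $f^{-1}[B(f(q);\eta_t/4)]$ and in $\overline{V_i}$, the dichotomy is settled without inspecting any preimage code, and validity plus totality of $g$ on $f[X]$ follow as you say. What your route buys is a shorter certificate (no search through $\bigcup_m V_{s(e,j,k,m)}$, only evaluation of $f$ at dense points, which is available in ${\sf RCA}_0$); what the paper's route buys is the literal statement ``$f$ is an open map,'' i.e.\ an explicit open code for $f[U]$ for every coded open $U$, from which the code of $f^{-1}$ is extracted as a corollary. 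Two small bookkeeping points you should make explicit: the second entry of an enumerated datum must be a rational, so enumerate $(b,r,a_j,\rho_j)$ for every rational $r<\eta_t/8$ (a $\Sigma^0_1$ condition) rather than the real $\eta_t/8$ itself; and the relation ``$\overline{V_i}\subseteq V_j$'' indexing your pairs must be taken in the formal (strict) sense on centres and radii, as in the paper, so that the list of pairs $\langle i,j\rangle$ and hence of the $t$'s is $\Sigma^0_1$.
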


\begin{proof}
We say that an open ball $B_X(x;q)$ in $X$ is formally (strictly, respectively)\ included in $B_X(y;p)$ if $d(x,y)\leq p-q$ ($d(x,y)<p-q$, respectively).
Note that if $B_X(x;q)$ is strictly included in $B_X(y;p)$ then $\overline{B_X(x;q)}\subseteq B_X(y;p)$.
Let $U=\bigcup_{e}V_{u(e)}\subseteq X$ be an open set given as a union of open balls.
Then, we make a new list $(V_{v(e,j)})_{e,j\in\mathbb{N}}$ of all open balls $V_j$ such that $V_j$ is strictly included in $V_{u(e)}$.

Let $t(e,j)$ be the code of the pair $\langle v(e,j),u(e)\rangle$ (i.e., $\mathcal{V}_{t(e,j)}=\{V_{u(e)},X\setminus\overline{V}_{v(e,j)}\}$).
We now consider a list $(B_k^{e,j})_{k\in\mathbb{N}}$ of all open balls of radius $\leq\eta_{t(e,j)}/4$ in $\mathbb{I}^{2n+1}$.
By Claim \ref{claim-v-mapping}, either $f^{-1}[B_k^{e,j}]\subseteq V_{u(e)}$ or $f^{-1}[B_k^{e,j}]\subseteq X\setminus \overline{V}_{v(e,j)}$ holds.
As we have already seen that $f$ is continuous, we get a code of the open set $f^{-1}[B_k^{e,j}]=\bigcup_{m}V_{s(e,j,k,m)}$.
If we see that $V_{s(e,j,k,m)}$ is formally included in $V_{v(e,j)}$ for some $m$, then we must have $f^{-1}[B_k^{e,j}]\subseteq V_{u(e)}$.
Let $(J_i)_{i\in\mathbb{N}}$ be a list of all such open balls $B_k^{e,j}$, that is,
\[\{J_i\}_{i\in\mathbb{N}}=\{B_k^{e,j}:\mbox{$V_{s(e,j,k,m)}$ is formally included in $V_{v(e,j)}$ for some $m$}\}.\]

We claim that $f[U]=\bigcup_{i\in\mathbb{N}}J_i$.
If $x\in U$, then $x\in V_{u(e)}$ for some $e$, and so $x\in V_{v(e,j)}$ for some $j$.
By Claim \ref{claim-v-mapping}, if $B$ is a sufficiently small basic open ball containing $f(x)$, then $f^{-1}[B]\subseteq V_{v(e,j)}$.
Hence, $f^{-1}[B]$ contains an open ball which is formally included in $V_{v(e,j)}$.
Therefore, $f(x)\in B=J_i$ for some $i\in\mathbb{N}$.
For the converse, if $J_i=B$ then $f^{-1}[B]\subseteq V_{u(e)}$ for corresponding $e$ as mentioned above, and therefore, $f^{-1}[B]\subseteq V_{u(e)}\subseteq U$.
Consequently, $B\subseteq f[U]$.
\end{proof}

By Claim \ref{claim-open-map}, one can effectively obtain a code of $f^{-1}$ as a continuous function.
This concludes the proof.
\end{proof}

\subsection{Every Polish space is at most one dimensional}

We say that $K$ is an {\em absolute extensor} if it is an absolute extensor for any Polish space.
In other words, if $X$ is a Polish space, for any continuous map $f\colon P\to K$ on a closed set $P\subseteq X$, one can find a continuous map $g\colon X\to K$ extending $f$.
The Tietze extension theorem states that the unit interval $\mathbb{I}$ is an absolute extensor.
This clearly implies that $\mathbb{I}^n$ is also an absolute extensor by coordinatewisely extending $f=(f_i)_{i<n}\colon P\to\mathbb{I}^n$ to $g=(g_i)_{i< n}\colon X\to\mathbb{I}^n$.
It is known that the effective version of the Tietze extension theorem is provable within ${\sf RCA}_0$ as follows:

\begin{Fact}[see Simpson {\cite[Theorem II.7.5]{Simpson}}]\label{fact:Tietze}
The Tietze extension theorem is provable in ${\sf RCA}_0$, that is, $\mathbb{I}^n$ is an absolute extensor.
\qed
\end{Fact}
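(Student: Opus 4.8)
The plan is to prove the effective version of the classical proof of Tietze's theorem, reducing first to the case $K=[0,1]$ and then extending coordinatewise to $\mathbb{I}^n$ (as already noted above, this last step is immediate, so the content is the case $n=1$). Fix a Polish space $X$, a closed set $P\subseteq X$ given by a negative code, and a continuous $f\colon P\to[0,1]$; after an affine rescaling we may instead take $f\colon P\to[-1,1]$ and aim to build a continuous $g\colon X\to[-1,1]$ with $g\upto P=f$.

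First I would set up the usual geometric-series recursion. Put $c_n=(2/3)^n$ and $f_0=f$. Given a continuous $f_n\colon P\to[-c_n,c_n]$, the sets $A_n=f_n^{-1}[-c_n,-c_n/3]$ and $B_n=f_n^{-1}[c_n/3,c_n]$ are disjoint closed subsets of $P$, hence of $X$, and from a modulus of continuity for $f_n$ one produces negative codes for them uniformly. By the effective Urysohn lemma — that is, perfect normality of $X$, Fact \ref{fact:perfectly-normal} — one obtains effectively a continuous $\phi_n\colon X\to[0,1]$ with $\phi_n^{-1}(0)=A_n$ and $\phi_n^{-1}(1)=B_n$; rescaling, $h_n:=\tfrac{c_n}{3}(2\phi_n-1)$ maps $X$ into $[-c_n/3,c_n/3]$, equals $-c_n/3$ on $A_n$ and $c_n/3$ on $B_n$, and $f_{n+1}:=f_n-h_n\upto P$ maps $P$ into $[-c_{n+1},c_{n+1}]$. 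The crucial point for ${\sf RCA}_0$ is that the passage (code of $f_n$) $\mapsto$ (codes of $h_n$ and $f_{n+1}$) is a single effective operation, so primitive recursion along an effective operator — available in ${\sf RCA}_0$ via $\Delta^0_1$-comprehension and $\Sigma^0_1$-induction — yields the entire sequence $\langle h_n,f_n\rangle_{n\in\mathbb{N}}$ as one object, with no appeal to stronger induction.

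Finally I would set $g=\sum_n h_n$. Since $\|h_n\|_\infty\le c_n/3$, the partial sums are uniformly Cauchy with the explicit modulus $\sum_{k\ge n}c_k/3=c_n$, so the uniform limit theorem (provable in ${\sf RCA}_0$, as recalled in the excerpt) produces a code for a continuous $g\colon X\to[-1,1]$. On $P$ one has $f-\sum_{k<n}h_k=f_n\to 0$ uniformly, whence $g\upto P=f$; undoing the rescaling gives the extension into $[0,1]$, and doing this in each coordinate gives the statement for $\mathbb{I}^n$. The only real subtlety — and the step I would be most careful about — is the bookkeeping in the middle paragraph: checking that $A_n,B_n$ admit negative codes obtained uniformly from the code of $f_n$, and that the codes produced for $h_n$ and $f_{n+1}$ genuinely name total continuous functions, so that the recursion stays inside $\Delta^0_1$-comprehension and $\Sigma^0_1$-induction. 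Everything else is the classical argument verbatim, and indeed this is in essence Simpson's Theorem II.7.5.
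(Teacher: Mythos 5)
Your proof is correct and is essentially Simpson's own argument for Theorem II.7.5; the paper does not reprove this Fact but simply cites Simpson, and your geometric-series recursion through the effective Urysohn lemma (Fact~\ref{fact:perfectly-normal}) is the standard ${\sf RCA}_0$ formalization of Tietze. The subtlety you flag --- generating the entire sequence $\langle h_n,f_n\rangle_{n}$ in one shot from an explicitly $\Sigma^0_1$-described operator so that only pointwise $\Sigma^0_1$-induction is needed to verify the bounds $\lVert f_n\rVert_\infty\le(2/3)^n$ and the totality of the codes --- is exactly the device the paper itself uses elsewhere (e.g.\ in the proof of Lemma~\ref{lem:lemma2}) and disposes of the only real obstruction.
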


It is intuitively obvious that the topological dimension of the $n$-hypercube ${\interval}^{n}$ is $n$ (but the proof is not so easy even in the classical world).
Surprisingly, however, under $\neg{\sf WKL}$, {\em every Polish space is at most one-dimensional} in the following sense.

\begin{Lemma}[${\sf RCA}_0+\neg{\sf WKL}$]\label{lem:extension-dimension}
If $X$ is a Polish space, then the $1$-sphere $\mathbb{S}^1$ is an absolute extensor for $X$.
\end{Lemma}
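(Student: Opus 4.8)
The plan is to reduce the statement to the one failure of the Brouwer fixed point theorem that $\neg{\sf WKL}$ is known to provide, namely Orevkov's fixed-point-free continuous self-map of the square. The underlying observation is that such a map makes $\mathbb{S}^1$ a \emph{retract} of the $2$-disk $\mathbb{D}^2$, and a retract of an absolute extensor is again an absolute extensor; since $\mathbb{I}^n$ is an absolute extensor by Fact~\ref{fact:Tietze}, this is exactly the input we are allowed to use.

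First I would record the reduction. Suppose that, working in ${\sf RCA}_0+\neg{\sf WKL}$, we have produced a code of a continuous retraction $r\colon\mathbb{D}^2\to\mathbb{S}^1$, where $\mathbb{S}^1=\partial\mathbb{D}^2$ and $r$ restricts to the identity on $\mathbb{S}^1$. Let $X$ be a Polish space, $P\subseteq X$ closed, and $f\colon P\to\mathbb{S}^1\subseteq\mathbb{D}^2$ continuous. Fixing an explicit homeomorphism $\mathbb{D}^2\cong\mathbb{I}^2$, the Tietze extension theorem (Fact~\ref{fact:Tietze}) effectively yields a continuous extension $\bar f\colon X\to\mathbb{D}^2$ of $f$; then $g:=r\circ\bar f\colon X\to\mathbb{S}^1$ is a continuous map extending $f$, because $r\circ f=f$ (as $f$ takes values in $\mathbb{S}^1$, on which $r$ is the identity) and composition of continuous functions is effective in ${\sf RCA}_0$. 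So the Lemma follows once the retraction $r$ is constructed.

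To construct $r$, I would use the standard elementary ``ray'' argument, which is constructive and uses no compactness: if $F\colon\mathbb{D}^2\to\mathbb{D}^2$ is continuous with $F(x)\neq x$ for all $x$, let $r(x)$ be the point at which the ray emanating from $F(x)$ and passing through $x$ meets $\mathbb{S}^1$. Since $\mathbb{D}^2$ is strictly convex, $r(x)$ is well-defined, lies on $\mathbb{S}^1$, and equals $x$ whenever $x\in\mathbb{S}^1$; its value is obtained by solving an explicit quadratic whose leading coefficient $\|x-F(x)\|^2$ is positive and locally bounded away from $0$, so a modulus of \emph{pointwise} continuity for $r$ is read off from one for $F$ together with the local positivity of $\|x-F(x)\|$. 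Hence it suffices to produce, in ${\sf RCA}_0+\neg{\sf WKL}$, a fixed-point-free continuous self-map of $\mathbb{D}^2$ (equivalently of $\mathbb{I}^2$, since $\mathbb{D}^2\cong\mathbb{I}^2$). This is exactly Orevkov's construction \cite{Orevkov}, carried out over ${\sf RCA}_0$ by Shioji--Tanaka \cite{ShTa90} (see also \cite[Section~IV.7]{Simpson}): from an infinite binary tree without an infinite path one builds, by an explicit ``infinite staircase'' following the tree, a continuous self-map of $\mathbb{I}^2$ displacing every point.

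The only genuinely nontrivial step is this last one: either reproducing the Orevkov/Shioji--Tanaka construction in full, or citing it and then verifying that the fixed-point-free $F$ it produces, together with the ray map $r$, is represented entirely by codes of pointwise-continuous functions, so that the resulting $g$ is obtained \emph{effectively} from $f$. Everything else (the reduction via Tietze, the ray construction) is routine ${\sf RCA}_0$ bookkeeping. It is also worth stressing that this counterexample occurs in dimension $2$, so it yields $\mathbb{S}^1$ rather than a higher sphere; this is what, via Lemmas~\ref{lem:lemma1} and~\ref{lem:lemma2}, forces every Polish space to have covering dimension at most $1$ under $\neg{\sf WKL}$, and hence, by N\"obeling's theorem (Theorem~\ref{thm:Nobeling}), to embed topologically into $\mathbb{R}^3$, completing the implication (4)$\Rightarrow$(1).
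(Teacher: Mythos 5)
Your proposal follows the same route as the paper: apply the Tietze extension theorem (Fact~\ref{fact:Tietze}) to extend $f$ to $\bar f\colon X\to\mathbb{I}^2$ and postcompose with a continuous retraction $r\colon\mathbb{I}^2\to\partial\mathbb{I}^2$ obtained from $\neg{\sf WKL}$ via Orevkov and Shioji--Tanaka. The only difference is that the paper cites the retraction directly from \cite{Orevkov,ShTa90}, whereas you insert the (standard and correct) ray construction passing from a fixed-point-free self-map to the retraction, which is an inessential fleshing-out of the same step.
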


\begin{proof}
By Orevkov's construction \cite{Orevkov} (cf.\ Shioji-Tanaka \cite{ShTa90}), if weak K\"onig's lemma fails, then there is a continuous retraction $r\colon{\interval}^2\to\partial {\interval}^2$.
Note that the $1$-dimensional sphere $\mathbb{S}^1$ is homeomorphic to $\partial {\interval}^2$.
Let $f\colon P\to\partial {\interval}^2$ be a continuous map on a closed set $P\subseteq X$.
Then, since ${\interval}^2$ is an absolute extensor by Fact \ref{fact:Tietze}, one can effectively find a continuous extension $f^*\colon X\to {\interval}^2$ of $f$ such that $f^*\upto P=f\upto P$.
Then $g=r\circ f^*\colon X\to\partial\mathbb{I}^2$ is continuous and extends $f$ since $r$ is a continuous retraction.
This concludes that $\mathbb{S}^1$ is an absolute extensor for $X$ as $\mathbb{S}^1\simeq\partial\mathbb{I}^2$.
\end{proof}

\begin{proof}[Proof of Theorem \ref{thm:main-theorem} (4)$\Rightarrow$(1)]
Suppose $\neg{\sf WKL}$.
Then, by Lemma \ref{lem:extension-dimension}, $\mathbb{S}^1$ is an absolute extensor for $\mathbb{R}^m$.
By Lemmata \ref{lem:lemma1} and \ref{lem:lemma2}, the covering dimension of $\mathbb{R}^m$ is at most one.
By Theorem \ref{thm:Nobeling}, there is a topological embedding $f$ of $\mathbb{R}^m$ into the one-dimensional N\"obeling space; that is, for any $x\in\mathbb{R}^m$, at most one coordinate of $f(x)\in\mathbb{R}^3$ is rational.
Consequently, there is a topological embedding of $\mathbb{R}^m$ into $\mathbb{R}^3$.
\end{proof}


%

\section{Continuous degrees}

In this section, we mention some relationship between reverse mathematics of topological dimension theory and J.\ Miller's work on continuous degrees \cite{Miller}.

Classically, a space is countable dimensional if it is a countable union of zero dimensional subspaces.
However, within ${\sf RCA}_0$, it is difficult to handle with the notion of a subspace.
Instead, we use the following definition.
A {\em copy of a subspace of $Y$ in $X$} is a pair $S=(f,g)$ of (codes of) partial continuous functions $f\pcolon X\to Y$ and $g\pcolon Y\to X$.
Then, we say that {\em $x\in X$ is a point in $S=(f,g)$} if $f(x)$ is defined, and $g\circ f(x)$ is defined and equal to $x$.
A separable metric space $X$ is {\em countable dimensional} if $X$ is a union of countably many copies of subspaces of $\mathbb{N}^\mathbb{N}$; that is, there is a sequence $(S_e)_{e\in\mathbb{N}}$ of copies of subspaces of $\mathbb{N}^\mathbb{N}$ such that every $x\in X$ is a point in $S_e$ for some $e\in\mathbb{N}$.

\begin{Theorem}\label{thm:countable-dimensional}
The following are equivalent over ${\sf RCA}_0$:
\begin{enumerate}
\item Weak K\"onig's lemma.
\item The Hilbert cube $\mathbb{I}^\mathbb{N}$ is not countable dimensional.
\end{enumerate}
\end{Theorem}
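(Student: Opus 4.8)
\emph{Plan.} I would prove $(2)\Rightarrow(1)$ in contrapositive form, using the dimension-theoretic results established above, and prove $(1)\Rightarrow(2)$ by rendering inside ${\sf WKL}_0$ the classical argument that the Hilbert cube is not countable-dimensional.

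\emph{Proof of $\neg{\sf WKL}\Rightarrow\neg(2)$.} Assume $\neg{\sf WKL}$. By Lemma~\ref{lem:extension-dimension}, $\mathbb{S}^1$ is an absolute extensor for the Polish space $\mathbb{I}^\mathbb{N}$, so by Lemmata~\ref{lem:lemma1} and~\ref{lem:lemma2} the covering dimension of $\mathbb{I}^\mathbb{N}$ is at most one, and Theorem~\ref{thm:Nobeling} provides a topological embedding $(f,g)$ of $\mathbb{I}^\mathbb{N}$ whose range lies in the $1$-dimensional N\"obeling space $N^1\subseteq\mathbb{I}^3$, i.e.\ every point of $f[\mathbb{I}^\mathbb{N}]$ has at most one rational coordinate. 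It then suffices to decompose $N^1$ into countably many copies of subspaces of $\mathbb{N}^\mathbb{N}$ and to transport the decomposition back through $(f,g)$. Write $N^1=P\cup\bigcup_{i<3,\ q\in\mathbb{Q}\cap\mathbb{I}}Q_{i,q}$, where $P$ is the set of points of $\mathbb{I}^3$ all of whose coordinates are irrational, and $Q_{i,q}$ is the set of points whose $i$-th coordinate equals $q$ and whose other two coordinates are irrational. Using a computable coding of the irrationals of $\mathbb{I}$ by $\mathbb{N}^\mathbb{N}$ (for instance via continued fractions) together with the standard homeomorphism $(\mathbb{N}^\mathbb{N})^k\cong\mathbb{N}^\mathbb{N}$, each piece $C$ above carries, provably in ${\sf RCA}_0$, partial continuous maps $\alpha_C\pcolon\mathbb{I}^3\to\mathbb{N}^\mathbb{N}$ (defined on $C$) and $\beta_C\colon\mathbb{N}^\mathbb{N}\to\mathbb{I}^3$ with $\beta_C\circ\alpha_C=\mathrm{id}$ on $C$. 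Then $(\alpha_C\circ f,\ g\circ\beta_C)$ is a copy of a subspace of $\mathbb{N}^\mathbb{N}$ in $\mathbb{I}^\mathbb{N}$, and since $f(x)\in N^1$ for every $x\in\mathbb{I}^\mathbb{N}$, each such $x$ is a point in one of these countably many copies; hence $\mathbb{I}^\mathbb{N}$ is countable-dimensional.

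\emph{Proof of $(1)\Rightarrow(2)$.} Working in ${\sf WKL}_0$, I would first check that the family of opposite-face pairs $A_i=\{x\in\mathbb{I}^\mathbb{N}:x_i=0\}$, $B_i=\{x\in\mathbb{I}^\mathbb{N}:x_i=1\}$ is essential, i.e.\ every sequence of partitions $L_i$ between $A_i$ and $B_i$ has $\bigcap_iL_i\neq\emptyset$: Heine--Borel for the compact space $\mathbb{I}^\mathbb{N}$ (available in ${\sf WKL}_0$) reduces this to the essentiality of finitely many opposite faces, which, after composing with the projection $\mathbb{I}^\mathbb{N}\to\mathbb{I}^n$, is the non-existence of a retraction of $\mathbb{I}^n$ onto $\partial\mathbb{I}^n$, a Brouwer-type theorem available in ${\sf WKL}_0$ (Orevkov~\cite{Orevkov}; Shioji--Tanaka~\cite{ShTa90}). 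On the other hand, if $\mathbb{I}^\mathbb{N}=\bigcup_eS_e$ with each $S_e=(f_e,g_e)$ a copy of a subspace of $\mathbb{N}^\mathbb{N}$, then $f_e$ restricts to a homeomorphism of the set $Z_e$ of points in $S_e$ onto a subspace of $\mathbb{N}^\mathbb{N}$, so $Z_e$ is zero-dimensional with an explicit witness, and hence one can recursively build partitions $L_1\supseteq L_2\supseteq\cdots$ with $L_i$ a partition between $A_i$ and $B_i$ that avoids $Z_i$---a partition can be chosen to side-step a zero-dimensional set. Then $\bigcap_iL_i$ meets no $Z_i$, hence misses $\bigcup_eS_e=\mathbb{I}^\mathbb{N}$, so $\bigcap_iL_i=\emptyset$, contradicting essentiality of the opposite-face family. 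Thus $\mathbb{I}^\mathbb{N}$ cannot be countable-dimensional.

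\emph{Main obstacle.} The hard part is the last step of $(1)\Rightarrow(2)$: constructing, effectively and with only $\Sigma^0_1$-induction, a partition between two disjoint closed sets that avoids the set of points in a given copy of a subspace of $\mathbb{N}^\mathbb{N}$, and then controlling the nested sequence $(L_i)$ and proving $\bigcap_iL_i=\emptyset$ without any compactness argument. This is the ${\sf RCA}_0/{\sf WKL}_0$ counterpart of the classical addition and decomposition theorems for countable-dimensional spaces, and---just as with Lemmata~\ref{lem:coceshrinking}, \ref{lem:star-refinement}, \ref{lem:lemma2}---it will require replacing distance-to-complement functions and any uses of full induction by effective surrogates and $\Sigma^0_1$-induction. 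The bookkeeping in $\neg{\sf WKL}\Rightarrow\neg(2)$, by contrast, is routine once Theorem~\ref{thm:Nobeling} is in hand.
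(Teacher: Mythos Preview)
Your approach coincides with the paper's. For $\neg{\sf WKL}\Rightarrow\neg(2)$ you run exactly the same chain (Lemma~\ref{lem:extension-dimension}, Lemmata~\ref{lem:lemma1} and~\ref{lem:lemma2}, Theorem~\ref{thm:Nobeling}, then decompose $N^1$); the paper compresses your explicit decomposition $P\cup\bigcup Q_{i,q}$ into the single clause ``$N^1$ is a finite union of zero-dimensional subspaces.'' For $(1)\Rightarrow(2)$ the paper simply cites ``the usual argument (cf.\ \cite[Theorem~1.8.20]{Engelking}) only uses the Brouwer fixed point theorem, which can be carried out in ${\sf WKL}_0$,'' and your outline via essential opposite-face pairs and partition-avoidance is precisely that argument.

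One correction to your assessment of the ``main obstacle'': in the $(1)\Rightarrow(2)$ direction you are working inside ${\sf WKL}_0$, so Heine--Borel compactness of $\mathbb{I}^\mathbb{N}$ is available and there is no reason to avoid it---indeed you already invoke it for essentiality. Moreover, the emptiness of $\bigcap_iL_i$ does not require compactness at all: each $x\in\mathbb{I}^\mathbb{N}$ lies in some $Z_e$, and $L_e\cap Z_e=\emptyset$ by construction, so $x\notin\bigcap_iL_i$. The only genuinely delicate step is the partition-avoidance lemma itself (producing a partition between $A_i$ and $B_i$ inside a given closed $L_{i-1}$ that misses the set of points of a copy $(f_e,g_e)$, whose domain need not have a simple code), together with keeping the recursion on $(L_i)$ at the level of codes so that only $\Sigma^0_1$-induction is used. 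The paper does not spell this out either; it treats the formalization as routine once Brouwer is available.
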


\begin{proof}
(1)$\Rightarrow$(2):
The usual argument (cf.\ \cite[Theorem 1.8.20]{Engelking}) only uses the Brouwer fixed point theorem, which can be carried out in ${\sf WKL}_0$ \cite{ShTa90}.

(2)$\Rightarrow$(1):
As $\mathbb{I}^\mathbb{N}$ is Polish, if we assume $\neg{\sf WKL}$ then, by Lemma \ref{lem:extension-dimension}, $\mathbb{S}^1$ is absolute extensor for $\mathbb{I}^\mathbb{N}$.
Therefore, by Lemmata \ref{lem:lemma1} and \ref{lem:lemma2}, and Theorem \ref{thm:Nobeling}, $\mathbb{I}^\mathbb{N}$ can be embedded into the $1$-dimensional N\"obeling space $N^1$.
Now, it is clear that $N^1$ is a finite union of zero dimensional subspaces.
\end{proof}

Indeed, the instance-wise version of Theorem \ref{thm:countable-dimensional} holds.
We now consider the instance-wise version in an $\om$-model $(\om,\mathcal{S})$ of ${\sf RCA}_0$:
For (1)$\Rightarrow$(2), if $(S_e)_{e\in\om}\in\mathcal{S}$ is a sequence of copies of subspaces of $\om^\om$, then there is an infinite binary tree $T\in\mathcal{S}$ such that every infinite path through $T$ computes a point $x\in\mathbb{I}^\om$ which is not a point of $S_e$ for any $e\in\om$.
For (2)$\Rightarrow$(1), if $T\in\mathcal{S}$ is an infinite binary tree, then there is a sequence $(S_e)_{e\in\om}\in\mathcal{S}$ of copies of subspaces of $\om^\om$ such that if $x\in\mathbb{I}^\om$ is not a point in $S_e$ for any $e\in\om$, then $x$ computes an infinite path through $T$.

We now interpret this instance-wise $\om$-model version of Theorem \ref{thm:countable-dimensional} in the context of continuous degrees.
We say that {\em $\mathbf{b}$ is PA-above $\mathbf{a}$} (written $\mathbf{a}\ll\mathbf{b}$) if for any $\mathbf{a}$-computable infinite binary tree has a $\mathbf{b}$-computable infinite path.
Miller \cite{Miller} reduced the first-order definability of PA-aboveness to that of continuous degrees:
Whenever $\mathbf{a}$ and $\mathbf{b}$ are total degrees, $\mathbf{a}\ll\mathbf{b}$ if and only if there is a non-total continuous degree $\mathbf{v}$ such that $\mathbf{a}<\mathbf{v}<\mathbf{b}$.
For continuous and total degrees, see Miller \cite{Miller}.

(1)$\Rightarrow$(2) implies Theorem 8.2 in \cite{Miller}: If $\mathbf{a}$ and $\mathbf{b}$ are total degrees and $\mathbf{b}\ll\mathbf{a}$, then there is a non-total continuous degree $\mathbf{v}$ with $\mathbf{b}<\mathbf{v}<\mathbf{a}$.
To see this, consider the topped $\om$-model of ${\sf RCA}_0$ consisting of all sets of Turing degree $\leq\mathbf{b}$.
Then, as in Kihara-Pauly \cite{KiPa}, take the list $(f_e,g_e)$ of all pairs of Turing reductions (more precisely, all reductions in the sense of representation reducibility), which is considered as copies in $\mathbb{I}^\om$ of subspaces of $\om^\om$.
By (1)$\Rightarrow$(2), there is an infinite binary tree $T$ of Turing degree $\mathbf{b}$ such that any path computes $x\in\mathbb{I}^\om$ which is not a point in $(f_e,g_e)$.
Such an $x$ is non-total since there is no $\alpha\in\om^\om$ such that $f_e(x)=\alpha$ and $g_e(\alpha)=x$.
As $\mathbf{b}\ll\mathbf{a}$, such an $x$ is computable in $\mathbf{a}$.
If necessary, by adding a new coordinate to $x$ to code $\mathbf{b}$, we can conclude that there is a non-total degree $\mathbf{v}$ with $\mathbf{b}<\mathbf{v}<\mathbf{a}$.

(2)$\Rightarrow$(1) implies Theorem 8.4 in \cite{Miller}: If $\mathbf{v}$ is a non-total continuous degree and $\mathbf{b}<\mathbf{v}$ is total, then there is a total degree $\mathbf{c}$ with $\mathbf{b}\ll\mathbf{c}<\mathbf{v}$.
To see this, consider the same $\om$-model $\mathcal{S}$ as above.
As in Kihara-Pauly \cite{KiPa}, we consider a copy $S\in\mathcal{S}$ of a subspace of $\om^\om$ in $\mathbb{I}^\om$ as a pair of $\mathbf{b}$-relative Turing reductions.
As $\mathbf{v}$ is non-total, and $\mathbf{b}\leq\mathbf{v}$, a point $x\in\mathbb{I}^\om$ of degree $\mathbf{v}$ avoids any sequence of copies $(S_e)_{e\in\om}\in\mathcal{S}$ of subspaces of $\om^\om$ in $\mathbb{I}^\om$.
Hence, by (2)$\Rightarrow$(1), for any infinite binary tree $T\in\mathcal{S}$, $x$ computes an infinite path $c$ through $T$.
Consequently, we have $\mathbf{b}\ll\mathbf{c}<\mathbf{v}$ for some $\mathbf{c}$.

This argument indicates that (some of) J.\ Miller's work \cite{Miller} (on definability of PA-degrees via continuous degrees) can be considered as the computable instance-wise version of Theorem \ref{thm:countable-dimensional}.

\begin{Acknowledgement}
The author's research was partially supported by JSPS KAKENHI Grant 17H06738, 15H03634, the JSPS Core-to-Core Program (A. Advanced Research Networks), and the Young Scholars Overseas Visit Program in Nagoya University.
The author would like to thank Keita Yokoyama for valuable discussions.
\end{Acknowledgement}

\bibliographystyle{plain}
\bibliography{Brouwer}
\end{document}